\theoremstyle{plain}
\newtheorem{theorem}{Theorem}[section]
\newtheorem{proposition}[theorem]{Proposition}
\newtheorem{lemma}[theorem]{Lemma}
\newtheorem{corollary}[theorem]{Corollary}
\theoremstyle{definition}
\newtheorem{remark}[theorem]{Remark}
\newtheorem{example}[theorem]{Example}
\numberwithin{equation}{section}
\newcommand{\Z}{\mathbb{Z}}
\newcommand{\Q}{\mathbb{Q}}
\newcommand{\R}{\mathbb{R}}
\newcommand{\set}[2]{\{#1\,|\ #2\}}
\newcommand{\sub}{\subseteq}
\newcommand{\End}{\mathrm{End}}
\begin{document}
\title[Additively idempotent matrix semirings]{Additively idempotent matrix semirings}

\author[T.~Kepka]{Tom\'{a}\v{s}~Kepka}
\address{Department of Mathematics and Mathematical Education, Faculty of Education, Charles University, M. Rettigov\'{e} 4, 116 59 Prague 1, Czech Republic}
\email{tomas.kepka@pedf.cuni.cz}

\author[M.~Korbel\'a\v{r}]{Miroslav~Korbel\'a\v{r}}
\address{Department of Mathematics, Faculty of Electrical Engineering, Czech Technical University in Prague, Technick\'{a} 2, 166 27 Prague 6, Czech Republic}
\email{korbemir@fel.cvut.cz}

\thanks{The second author acknowledges the support by the bilateral Austrian Science Fund (FWF) project I 4579-N and Czech Science Foundation (GA\v{C}R) project 20-09869L ``The many facets of orthomodularity''  and by the project CAAS CZ.02.1.01/0.0/0.0/16\_019/0000778}

\keywords{congruence-simple, semiring, subdirectly irreducible, matrix idempotent, MV-algebra, integral}
\subjclass[2010]{16Y60, 08B26, 06D35}

%16Y60 ASSOCIATIVE RINGS AND ALGEBRAS - Generalizations - Semirings
%08B26 Varieties - Subdirect products and subdirect irreducibility
%06D35 Distributive lattices - MV-algebras

% = Abstract ========================================================
\begin{abstract}

Let $S$ be an additively idempotent semiring and $\mathbf{M}_n(S)$ be the semiring of all $n\times n$ matrices over $S$.
 We characterize the conditions of when the semiring $\mathbf{M}_n(S)$  is congruence-simple provided that the semiring $S$ is either commutative or finite.
 We also give a characterization of when the semiring $\mathbf{M}_n(S)$ is subdirectly irreducible for  $S$ beeing almost integral (i.e., $xy+yx+x=x$ for all $x,y\in S$). In particular, we provide this characterization for the semirings $S$ derived from the pseudo MV-algebras.  
\end{abstract}

\maketitle
\vspace{4ex}

Semirings of endomorphisms of semilattices provide natural examples of additively idempotent semirings. In \cite{jezek2} it was shown that the semiring of all endomorphisms of a non-trivial
semilattice $L$ is always subdirectly irreducible. Moreover, such a  semiring is congruence-simple if and only if the
semilattice $L$ has both the least and the largest elements.

An important case of endomorphisms semirings are the matrix semirings. Here a similar characterization of congruence-simplicity (subdirect irreducibility, resp.) of the matrix semirings is missing (in contrast with the well known fact that the matrix \emph{ring} over a simple \emph{ring}, with a non-trivial multiplication, is simple, too.).

Let $S$ be a semiring $S$ and $\mathbf{M}_n(S)$ be the semiring of all square $n\times n$ matrices over $S$.
For a semiring $S$ with a zero, the case when $\mathbf{M}_n(S)$ is a congruence-simple semiring was characterized in \cite{matrix}.  On the other hand, also semirings without a zero element or without a unity deserve attention, as they naturally appear in mathematics (e.g., as the unbounded distributive lattices). 

Congruence-simple matrix semirings $\mathbf{M}_n(S)$ split into two disjoint classes \cite{simple} - either they are embeddedable into rings or they are additively idempotent. The first of the classes was studied in \cite{matrix}.
In this paper we will investigate the latter class, i.e., the additively idempotent case. 
We provide a full characterization of additively idempotent congruence-simple semirings $\mathbf{M}_n(S)$ under an additional assumption that the semiring $S$ is either commutative or finite (Theorems \ref{commutative} and \ref{finite}). 
It turns out that the only commutative infinite cases of such semirings $S$ are precisely all the unbounded (from above and below) subsemirings of the  well-known tropical semiring $\R(\min,+)$ or, equivalently, of the max-plus semiring $\R(\max,+)$.

A natural generalization of congruence-simple semirings are the subdirectly irreducible ones (that  are the basic building blocks in varieties of semirings according to the well-known Birkhoff's theorem). We may ask therefore, when the semiring  $\mathbf{M}_n(S)$ is subdirectly irreducible. In general such a problem seems to be difficult. However, for additively idempotent semirings $S$ with both a zero and a unity we show that this occurs precisely when $S$ itself is subdirectly irreducible (Theorem \ref{subdirectly_irreducible_4}).

An important class of semirings are those semiring $S$ having a unity $1$ that is aditively absorbing (i.e., $1\cdot x=x\cdot 1=x$ and $x+1=1$ for every $x\in S$). Such semirings are for instance the semiring of all ideals of a ring or the powers-set lattice. These semirings are necessarily additively idempotent and often they are denoted as \emph{integral} \cite{residuated,jipsen}. We study a generalization of the class of integral semirings that will be defined by the conditions $x+x=x$, $xy\leq x$ and $yx\leq x$ for all $x,y\in S$ and call these semirings \emph{almost integral}.

We fully characterize the subdirectly irreducible (congruence-simple, resp.)  matrix semirings $\mathbf{M}_n(S)$, $n\geq 1$, over the almost integral semirings $S$ (Theorems \ref{integral_main}, \ref{lattice} and \ref{subdirectly_irreducible_3}).
 General examples of these cases will be provided in Remarks \ref{construction}, \ref{last_remark} and Examples \ref{example_const} and \ref{non-commutative}.

Finally, we investigate the condition on subdirect irreducibility (congruence-simplicity, resp.) for an important subclass of integral semirings that are closely connected with many-valued \L ukasiewicz logics. In this type of logics, the standard algebraic tools are the MV-algebras and their non-commutative generalizations - the pseudo MV-algebras \cite{dvurecenskij,georgescu,rachunek}. It is known  that  for studying of these object, a semiring approach is helpful, in particular, the theory of semimodules over semirings is used \cite{dinola_2}. Semirings allow an equivalent description of MV-algebras in the form of MV-semirings, or the so called \emph{coupled semirings} in the case of pseudo MV-algebras \cite{dinola_2,shang}. In both of these cases there is a common construction of the corresponding semirings that uses the lattice ordered groups \cite{dvurecenskij,mundici2}. Locally to this paper, we call these semirings \emph{MV-derived} (see Example \ref{example}). 

In this paper we show that the matrix semiring $\mathbf{M}_n(S)$ over an MV-derived semiring $S$ is subdirectly irreducible if and only if $S$ is subdirectly irreducible and that this happens if and only if $S$ has the least non-zero element (Theorem \ref{corollary_GMV}). In particular, the congruence-simple case of $\mathbf{M}_n(S)$ takes place if and only $S$ has two elements.

\section{Preliminaries}

A \emph{semiring} $S$  will be a non-empty set equipped with two associative binary operations, usually denoted as addition $+$ and multiplication $\cdot$ such that the addition is commutative and the multiplication distributes over the addition from both sides.

Let $S=S(+,\cdot)$ be a semiring. For  subsets $A,B$ of $S$ we set  $AB=\set{ab}{a\in A, b\in B}$. A \textit{congruence} on a semiring $S$ is an equivalence relation that is preserved under the addition and multiplication.
The semiring $S$ is called
\begin{itemize}
 \item \emph{congruence-simple} if $S$ has just two congruences;
 \item \emph{subdirectly irreducible (SI)} if $S$ has the least non-identical congruence;
 \item \emph{additively idempotent} if $a+a=a$ for every $a\in S$.
\end{itemize}

An element $w\in S$ is called \emph{right (left, resp.) multiplicatively  absorbing} if $Sw=\{w\}$ ($wS=\{w\}$, resp.);
\emph{multiplicatively absorbing} if $w$ is both right and left multiplicatively absorbing;
 \emph{a unity} if $aw=a=wa$ for every $a\in S$ (such an element will be denoted by $1_S$);
 \emph{bi-absorbing} if $w$ is     multiplicatively absorbing and $a+w=w$ for every $a\in S$ (such an element will be denoted by $o_S$);
 a \emph{zero} if $w$ is multiplicatively absorbing and $a+w=a$ for every $a\in S$ (such an element will be denoted by $0_S$).

For the semiring $\mathbf{M}_n(S)$ of all (square) $n\times n$-matrices over $S$ we consider the usual matrix operations addition $(a_{ij})+(b_{ij})=(a_{ij}+b_{ij})$ and multiplication $(a_{ij})(b_{ij})=(\sum_{k}a_{ik}b_{kj})$.

 For every $a\in S$, let $\overline{a}=(a_{ij})\in \mathbf{M}_n(S)$ be such a matrix that $a_{ij}=a$ for all $i,j$. Clearly, $\overline{a}+\overline{b}=\overline{a+b}$ and $\overline{a}\overline{b}=n\overline{ab}$. Thus $\overline{S}=\set{\overline{a}}{a\in S}$ is a subsemiring of $\mathbf{M}_n(S)$. If $S$ is additively idempotent, then $\overline{S}\cong S$.
 
\

Let us state some main properties of congruence-simple semirings from \cite{simple,matrix}.

\begin{lemma}\cite[Lemma 2.2]{simple}\label{6.0}
 Let $S$ be a congruence-simple semiring such that $|SS|\geq 2$. Then at least one of the following conditions holds:
 \begin{enumerate}
 \item[(i)] $|S|=2$, $S$ is both multiplicatively and additively idempotent and $S$ has no multiplicatively absorbing element.
 \item[(ii)] For every $a,b\in S$ such that $a\neq b$ there are $c,d\in S$ such that $ca\neq cb$ and $ad\neq bd$.
 \end{enumerate}
\end{lemma}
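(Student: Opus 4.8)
The plan is to detect condition (ii) by means of two naturally associated congruences, and to show that their failure to be trivial forces the exceptional case (i). I would define
\[
\rho_L=\set{(a,b)}{ca=cb \text{ for all } c\in S},\qquad
\rho_R=\set{(a,b)}{ad=bd \text{ for all } d\in S}.
\]
First I would check that $\rho_L$ is a congruence: it is visibly an equivalence, it is additive since $ca=cb$ and $ca'=cb'$ give $c(a+a')=c(b+b')$, and it is multiplicative because $(a,b)\in\rho_L$ yields $c(ax)=(ca)x=(cb)x=c(bx)$ and $c(xa)=(cx)a=(cx)b=c(xb)$ using associativity. Symmetrically $\rho_R$ is a congruence. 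Since $S$ is congruence-simple, each of $\rho_L,\rho_R$ is either the diagonal $\Delta$ or the full relation $\nabla$. Unwinding the quantifiers, condition (ii) holds for \emph{every} $a\neq b$ precisely when $\rho_L=\Delta$ and $\rho_R=\Delta$; so the only remaining task is to show that if at least one of $\rho_L,\rho_R$ equals $\nabla$, then (i) holds.

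Suppose first that $\rho_L=\rho_R=\nabla$. Then $cx$ is independent of $x$ and $xy$ is independent of $x$, so the product $xy$ is a single constant element of $S$; this gives $|SS|=1$, contradicting the hypothesis $|SS|\geq2$. Hence exactly one of them is $\nabla$, and by the left--right symmetry of the statement I may assume $\rho_L=\nabla$ and $\rho_R=\Delta$. From $\rho_L=\nabla$ the value $cx$ depends only on $c$; write $f(c)=cx$. Then $ad=bd$ for all $d$ is equivalent to $f(a)=f(b)$, so $\rho_R=\ker f$, and $\rho_R=\Delta$ means $f$ is injective. Associativity gives $f(f(c))=(cx)y=c(xy)=f(c)$, so $f\circ f=f$, and injectivity forces $f=\mathrm{id}$. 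Thus $xy=x$ for all $x,y\in S$. Feeding this into the distributive law $c(a+b)=ca+cb$ gives $c=c+c$, so $S$ is additively idempotent (and, since $xx=x$, multiplicatively idempotent as well).

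It remains to pin down the size of $S$. Because the multiplication is the first projection $xy=x$, every semiring congruence is automatically compatible with multiplication, so the congruences of $S$ coincide with the congruences of the semilattice $(S,+)$. Consequently $(S,+)$ is a congruence-simple semilattice, and by the classical fact that the two-element semilattice is the only subdirectly irreducible (hence the only simple) semilattice, we get $|S|=2$. A direct check on a two-element $S$ then shows there is no multiplicatively absorbing element (each element $w$ satisfies $wS=\{w\}$ but $Sw=S$), completing case (i); the case $\rho_R=\nabla$, $\rho_L=\Delta$ is identical with left and right interchanged (now $xy=y$).

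I expect the main obstacle to be this final reduction: correctly extracting from $\rho_L=\nabla$, $\rho_R=\Delta$ that the multiplication collapses to a projection (the injective-idempotent argument for $f$), and then justifying that semiring-simplicity genuinely reduces to semilattice-simplicity so that the classification of simple semilattices can be invoked. The congruence verifications and the $|SS|\geq2$ contradiction are routine by comparison.
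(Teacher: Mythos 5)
Your argument is correct: the two relations $\rho_L,\rho_R$ are indeed congruences, their simultaneous triviality is exactly condition (ii), both being full contradicts $|SS|\geq 2$, and the mixed case correctly collapses multiplication to a projection, reducing the problem to the classification of simple semilattices and yielding (i). The paper itself does not prove this lemma but imports it from \cite[Lemma 2.2]{simple}, and your route via the left/right ``cancellation'' congruences (the same $\lambda_S,\varrho_S$ the paper later introduces in Remark \ref{congruence_1}) is essentially the standard argument of that source, so there is no substantive divergence to report.
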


\begin{proposition}\cite[Proposition 3.4]{simple}\label{6.0.1}
 Let $S$ be a semiring. If the semiring $\mathbf{M}_n(S)$ is congruence-simple for some $n\geq 2$ then:
  \begin{enumerate}
  \item[(i)] $S$ is congruence-simple.
  \item[(ii)] $S$ contains no bi-absorbing element.
  \item[(iii)] $|SS|\geq 2$.
  \item[(iv)] $S$ is either additively idempotent or $S$ can be embedded into a ring.
 \end{enumerate}
\end{proposition}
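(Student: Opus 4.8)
The plan is to relate the congruence lattice of $\mathbf{M}_n(S)$ to the arithmetic of $S$ through the entrywise lifting of relations together with one structural observation about the additive reduct. Write $T=\mathbf{M}_n(S)$ and note first that $T$ being congruence-simple forces $|T|\geq 2$, hence $|S|\geq 2$. The key preliminary observation I would record is: \emph{if all products in $T$ coincide, i.e. $|TT|=1$, then $T$ is not congruence-simple}. Indeed, $(T,+)$ is the direct power $(S,+)^{n^2}$, and when multiplication is constant every congruence of the commutative semigroup $(T,+)$ is automatically a semiring congruence (any two products are equal, hence trivially related); the kernel of the projection onto the $(1,1)$-entry is then a semiring congruence that is both proper and non-identical because $n\geq 2$ and $|S|\geq 2$. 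This single fact will drive parts (ii) and (iii).

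For (i) I would lift a congruence $\rho$ of $S$ to the entrywise relation $\overline\rho$ on $T$ defined by $A\mathbin{\overline\rho}B \iff a_{ij}\,\rho\,b_{ij}$ for all $i,j$. Since matrix addition and multiplication are assembled from $+$ and $\cdot$, $\overline\rho$ is a congruence of $T$; moreover $\overline\rho$ is the identity (resp. the full) relation exactly when $\rho$ is. If $S$ had a congruence strictly between the trivial ones, then choosing $a\neq b$ with $a\,\rho\,b$ and matrices agreeing everywhere except in one entry shows $\overline\rho$ is non-identical, while a pair $a,b$ with $\neg(a\,\rho\,b)$ and the constant matrices $\overline a,\overline b$ shows $\overline\rho$ is proper; this contradicts congruence-simplicity of $T$. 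Hence $S$ has exactly two congruences.

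Part (iii) is then immediate: if $|SS|=1$, say $ab=z$ for all $a,b$, then $(AB)_{ij}=\sum_k a_{ik}b_{kj}=nz$ is independent of $A,B$, so $|TT|=1$, contradicting the preliminary observation. For (ii), suppose $S$ has a bi-absorbing element $o$; a direct check shows the constant matrix $\overline o$ is bi-absorbing in $T$. I would then form the left-annihilator $J=\set{B\in T}{CB=\overline o \text{ for all } C\in T}$, verify that $J$ is a two-sided ideal that is additively absorbing ($T+J\sub J$), and observe that $J$ has at least two elements: $\overline o\in J$, and the matrix with a single entry $a\neq o$ in position $(1,1)$ and $o$ elsewhere also lies in $J$. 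The Rees-type relation collapsing $J$ to one class and leaving everything else rigid is a congruence (the compatibility checks reduce to $J+T\sub J$, $JT\sub J$ and $TJ=\{\overline o\}$); it is non-identical since $|J|\geq 2$, and proper unless $J=T$, i.e. unless $|TT|=1$, which is excluded by the observation. This contradicts congruence-simplicity, so no bi-absorbing element exists.

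Finally, (iv) is the one part that does not reduce to an elementary congruence construction: having established in (i) that $S$ is itself congruence-simple, I would invoke the general structural dichotomy for congruence-simple semirings from \cite{simple}, by which such a semiring is either additively idempotent or embeddable into a ring, and transfer the alternative verbatim to $S$. I expect this last step to be the main obstacle, since it rests on a separate and deeper classification rather than on the matrix manipulations used for (i)--(iii); the other delicate point is the additive-reduct observation, whose role is precisely to supply the ``properness'' of the congruences built in (ii) and (iii).
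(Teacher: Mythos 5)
First, a contextual point: the paper does not prove this proposition at all --- it is imported verbatim as \cite[Proposition 3.4]{simple} --- so there is no in-paper argument to compare against. Your self-contained proofs of (i), (ii) and (iii) are correct: the entrywise lifting of a congruence of $S$ to $\mathbf{M}_n(S)$, the observation that $|TT|=1$ yields the proper non-identical congruence ``equal $(1,1)$-entries,'' and the Rees congruence on the left-annihilator ideal $J$ of $\overline{o}$ all check out (including the verification that $|J|\geq 2$ and that $J=T$ is excluded by the $|TT|=1$ observation).

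The genuine gap is in (iv). The ``general structural dichotomy for congruence-simple semirings'' that you invoke --- every congruence-simple semiring is additively idempotent or embeddable into a ring --- is false in that generality. Take $S=\{1,2\}$ with $x\oplus y=\min(x+y,2)$ and $x\odot y=\min(xy,2)$: this is a two-element semiring, hence congruence-simple, yet $1\oplus 1=2\neq 1$ (not additively idempotent) and $1\oplus 2=2\oplus 2$ with $1\neq 2$ (not additively cancellative, hence not embeddable into a ring). The correct dichotomy requires precisely the extra hypotheses you have just established (no bi-absorbing element --- note that $2$ is bi-absorbing in the counterexample --- and $|SS|\geq 2$), and in the form needed here it is essentially the content of \cite[Proposition 3.4]{simple} itself, so citing it ``verbatim for $S$'' is either circular or rests on a misquoted statement. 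A genuine proof of (iv) has to analyze concrete congruences of $S$, e.g.\ $a\sim b$ iff $a+x=b+x$ for some $x$ (whose triviality gives additive cancellativity and hence embeddability into the difference ring) and $a\approx b$ iff $a+a=b+b$, and then use (ii) to rule out the degenerate branches where both relations are full; this case analysis is the real work and is exactly what your proposal omits.
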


\begin{theorem}\cite[Theorem 2.4]{matrix}\label{6.4}
 Let $w\in S$ be a multiplicatively absorbing element. Then the following conditions are equivalent:
  \begin{enumerate}
  \item[(i)] $\mathbf{M}_n(S)$ is congruence-simple for every $n\geq 1$.
  \item[(ii)] $\mathbf{M}_n(S)$ is congruence-simple for at least one $n\geq 2$.
  \item[(iii)] $S$ is congruence-simple, $S$ contains no bi-absorbing element and $|SS|\geq 2$.
 \end{enumerate}
 If one of these conditions is fulfilled, then $w=0_S\in S$ is a zero.
\end{theorem}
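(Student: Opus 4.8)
The plan is to prove the cycle (i) $\Rightarrow$ (ii) $\Rightarrow$ (iii) $\Rightarrow$ (i), together with the final assertion that $w$ is a zero. The implication (i) $\Rightarrow$ (ii) is trivial, and (ii) $\Rightarrow$ (iii) is nothing but Proposition \ref{6.0.1}, whose conclusions (i)--(iii) are exactly the three requirements in (iii). Thus the whole content lies in proving (iii) $\Rightarrow$ (i); since (i)--(iii) are then equivalent, it suffices to establish the ``$w=0_S$'' claim under hypothesis (iii) as well, and I would do this first.

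To see that $w$ is a zero, I would first note that $w+w=w$ automatically: as $w$ is right multiplicatively absorbing, $(w+w)w=w$, while distributivity gives $(w+w)w=ww+ww=w+w$, so $w+w=w$. Next I would check that $\rho=\{(a,b)\mid a+w=b+w\}$ is a congruence on $S$; additive compatibility is a short computation and multiplicative compatibility uses $cw=w=wc$ together with $w+w=w$. Because $(a+w)+w=a+w$, every $a$ satisfies $a\mathrel{\rho}(a+w)$. Since $S$ is congruence-simple, $\rho$ is either the identity or the all-relation. If $\rho$ were the all-relation then $x+w=w+w=w$ for all $x$, making $w$ bi-absorbing, which is excluded by (iii); hence $\rho$ is the identity and $a+w=a$ for all $a$, i.e. $w=0_S$ is a zero.

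With $w=0_S$ in hand I would prove (iii) $\Rightarrow$ (i). For $n=1$ we have $\mathbf{M}_1(S)\cong S$, which is congruence-simple by (iii), so assume $n\geq 2$. For $a\in S$ let $aE_{ij}$ be the matrix with $a$ in position $(i,j)$ and $0_S$ elsewhere; since $0_S$ is a zero one has the decomposition $A=\sum_{i,j}a_{ij}E_{ij}$ and the extraction identity $(cE_{ip})\,A\,(dE_{qj})=(c\,a_{pq}\,d)E_{ij}$ for all $c,d\in S$ and all indices. Now let $\rho$ be any congruence on $\mathbf{M}_n(S)$ other than the identity and pick $A\mathrel{\rho}B$ with $A\neq B$, say $a_{pq}\neq b_{pq}$. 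Because $S$ possesses the multiplicatively absorbing element $w$, alternative (i) of Lemma \ref{6.0} cannot occur, so alternative (ii) holds; applying it first to the pair $a_{pq}\neq b_{pq}$ and then to the resulting pair $c\,a_{pq}\neq c\,b_{pq}$ yields $c,d\in S$ with $c\,a_{pq}\,d\neq c\,b_{pq}\,d$. Multiplying $A\mathrel{\rho}B$ by $cE_{ip}$ on the left and $dE_{qj}$ on the right gives $(c\,a_{pq}\,d)E_{ij}\mathrel{\rho}(c\,b_{pq}\,d)E_{ij}$.

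To finish I would, for each pair $(i,j)$, consider $\sigma_{ij}=\{(x,y)\mid xE_{ij}\mathrel{\rho}yE_{ij}\}$, which is readily checked to be a congruence on $S$ (products by $cE_{ij}$ give multiplicative compatibility). The previous step shows $\sigma_{ij}$ is not the identity, so by congruence-simplicity of $S$ it is the all-relation; hence $xE_{ij}\mathrel{\rho}yE_{ij}$ for all $x,y\in S$ and all $i,j$. Summing the decompositions $A=\sum_{i,j}a_{ij}E_{ij}$ and $B=\sum_{i,j}b_{ij}E_{ij}$ and using additivity of $\rho$ then gives $A\mathrel{\rho}B$ for arbitrary $A,B$, so $\rho$ is the all-relation and $\mathbf{M}_n(S)$ is congruence-simple. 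I expect the main obstacle to be the absence of a unit in $S$: without $1_S$ one cannot simply sandwich $A$ between $E_{1p}$ and $E_{q1}$ to read off its $(p,q)$-entry, and it is precisely Lemma \ref{6.0}(ii) --- available because the absorbing element $w$ forbids its alternative (i) --- that supplies the separating multipliers $c,d$ and drives the argument through.
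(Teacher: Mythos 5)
The paper does not prove this statement at all: it is quoted verbatim from \cite[Theorem 2.4]{matrix}, so there is no in-paper argument to compare yours against. Judged on its own, your reconstruction is correct and self-contained modulo the two imported results it leans on, namely Proposition \ref{6.0.1} for (ii)$\Rightarrow$(iii) and Lemma \ref{6.0} for the separating multipliers. The two genuinely nontrivial ingredients are both sound: first deriving $w=0_S$ from (iii) via the congruence $a\sim b\iff a+w=b+w$ (whose all-relation alternative would make $w$ bi-absorbing, which (iii) forbids), and then, with an honest zero in hand, running the matrix-unit extraction $(cE_{ip})A(dE_{qj})=(c\,a_{pq}\,d)E_{ij}$; the double application of Lemma \ref{6.0}(ii), first to $a_{pq}\neq b_{pq}$ and then to $c\,a_{pq}\neq c\,b_{pq}$, is exactly the device that compensates for the absence of a unity, as you correctly anticipate. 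One small slip: multiplicative compatibility of $\sigma_{ij}$ is obtained by multiplying by $cE_{ii}$ on the left and by $dE_{jj}$ on the right, not by $cE_{ij}$; as written, the product $(cE_{ij})(xE_{ij})$ vanishes unless $i=j$. This is cosmetic and does not affect the argument.
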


We may now show a further useful property.

\begin{proposition}\label{6.1}
 Let $S$ be a semiring and let the semiring $\mathbf{M}_n(S)$ be congruence-simple for some  $n\geq 2$. Then for every $a,b\in S$ such that $a\neq b$ there are $c,d\in S$ such that $ca\neq cb$ and $ad\neq bd$.
\end{proposition}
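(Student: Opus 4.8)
The plan is to deduce the statement from Lemma~\ref{6.0}, after first collecting structural information about $S$. Since $\mathbf{M}_n(S)$ is congruence-simple with $n\geq 2$, Proposition~\ref{6.0.1} guarantees that $S$ is itself congruence-simple and that $|SS|\geq 2$. Hence Lemma~\ref{6.0} applies to $S$, and one of its two alternatives must hold. Alternative (ii) is word-for-word the conclusion we are after, so the whole argument reduces to \emph{excluding} alternative (i); that exclusion is where the real work lies.

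So I would suppose, for contradiction, that (i) holds: $|S|=2$, say $S=\{x,y\}$ with $x\neq y$, the semiring $S$ is both additively and multiplicatively idempotent, and $S$ has no multiplicatively absorbing element. The first step is to pin down the multiplication. Multiplicative idempotency gives $xx=x$ and $yy=y$, so $xy,yx\in\{x,y\}$. If $xy=yx=x$, then $Sx=xS=\{x\}$ and $x$ would be multiplicatively absorbing, which is forbidden; symmetrically $xy=yx=y$ is forbidden. Hence $xy\neq yx$, leaving exactly two cases: $xy=x,\ yx=y$, which together with the idempotents forces $zw=z$ for all $z,w$ (the \emph{left projection}), and $xy=y,\ yx=x$, which forces $zw=w$ (the \emph{right projection}).

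It then suffices to treat the left projection, as the right projection is its opposite semiring and congruence-simplicity is preserved by the transpose anti-isomorphism $\mathbf{M}_n(S)\to\mathbf{M}_n(S)$. For the left projection $zw=z$ the matrix product degenerates: $(AB)_{ij}=\sum_k a_{ik}b_{kj}=\sum_k a_{ik}$, which is independent both of the column index $j$ and of the entire matrix $B$. Writing $r_i(A):=\sum_k a_{ik}$ for the $i$-th row sum, this says $AB$ is the matrix every entry of whose $i$-th row equals $r_i(A)$. I would then define a relation $\rho$ on $\mathbf{M}_n(S)$ by declaring $A\mathbin{\rho}B$ exactly when $r_i(A)=r_i(B)$ for every $i$, and verify it is a congruence: additive compatibility follows from $r_i(A+C)=r_i(A)+r_i(C)$; left compatibility holds because $CA$ and $CB$ are \emph{literally equal} (both are the row-sum matrix of $C$, regardless of the second factor); and right compatibility holds because, when $r_i(A)=r_i(B)$ for all $i$, the matrices $AC$ and $BC$ coincide entrywise.

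Finally I would check that $\rho$ is proper. Since $n\geq 2$, swapping the two distinct values $x,y$ inside a single row produces two different matrices with identical row-sum vectors, so $\rho$ strictly refines the all-relation... rather, $\rho$ is strictly coarser than the identity; and the constant matrices with all entries $x$ and all entries $y$ have row-sum vectors $(x,\dots,x)\neq(y,\dots,y)$, so $\rho$ is not the all-relation. Thus $\rho$ is a nontrivial congruence on $\mathbf{M}_n(S)$, contradicting congruence-simplicity. This rules out alternative (i), leaving (ii), which is the claim. The only genuinely delicate point is the construction and verification of $\rho$ (and the observation that for $n=1$ no such congruence exists, consistent with the hypothesis $n\geq 2$); everything else is bookkeeping resting on Proposition~\ref{6.0.1} and Lemma~\ref{6.0}.
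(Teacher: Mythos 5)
Your proof is correct, and it begins the same way the paper does: Proposition \ref{6.0.1} plus Lemma \ref{6.0} applied to $S$ reduce everything to ruling out alternative (i) of Lemma \ref{6.0}. Where you genuinely diverge is in how that exclusion is carried out. The paper never leaves Lemma \ref{6.0}: from the assumed failure of the conclusion it deduces $|S|=2$ and additive idempotency, notes that the identity $ac=bc$ lifts entrywise to $AC=BC$ in $T=\mathbf{M}_n(S)$, checks $|TT|\geq 2$ via the diagonal copy $\overline{S}$, and then applies Lemma \ref{6.0} a \emph{second} time, now to $T$, forcing $|T|=2$ and contradicting $|T|=2^{n^2}$. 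You instead pin down the two-element semiring completely (no multiplicatively absorbing element leaves only the left- and right-projection multiplications) and exhibit an explicit proper non-identical congruence on $\mathbf{M}_n(S)$ --- equality of row-sum vectors --- contradicting congruence-simplicity directly. Your route is a bit longer but more concrete: it displays the congruence that witnesses the failure and avoids re-verifying the hypotheses of Lemma \ref{6.0} for the matrix semiring; the paper's is shorter because it recycles the lemma as a black box. One small imprecision to fix: transposition is not an anti-automorphism of $\mathbf{M}_n(S)$ when $S$ is noncommutative (and the projection semirings are noncommutative); it is an isomorphism $\mathbf{M}_n(S)^{\mathrm{op}}\cong\mathbf{M}_n(S^{\mathrm{op}})$. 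Either phrase the symmetry that way (a semiring and its opposite have the same congruences), or simply run the dual argument for the right projection using column sums $c_j(A)=\sum_k a_{kj}$, for which $(AB)_{ij}=c_j(B)$; both fixes are immediate and do not affect the validity of the argument.
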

\begin{proof}
Let $T=\mathbf{M}_n(S)$.  Assume, for contrary, that for all $a,b,c\in S$ such that $a\neq b$ it holds that $ac=bc$.  By Proposition \ref{6.0.1} (iii) and Lemma \ref{6.0}, we obtain that $|S|=2$ and $S$ is additively idempotent. Hence $|T|=2^{n^2}$. 

Assuming that $|TT|=1$, we obtain that for all $x,y,z,u\in S$ it holds, by the additive idempotency of $S$, that $\overline{xy}=\overline{x}\cdot\overline{y}=\overline{z}\cdot\overline{u}=\overline{zu}$. This implies the equation $xy=zu$ for all $x,y,z,u\in S$ and therefore $|SS|=1$, a contradiction with Proposition \ref{6.0.1}(iii).

Thus $|TT|\geq 2$. Our assumption saying that $ac=bc$ for all  $a,b,c\in S$ implies that  $AC=BC$ for all $A,B,C\in T$. Hence, by Lemma \ref{6.0}, it follows that $|T|=2\neq 2^{n^2}$. This is a contradiction.

In this way we have proved that for every $a,b\in S$ such that $a\neq b$ there is $c\in S$ such that $ac\neq bc$. The rest of the proof follows by an analogous argument.
 \end{proof}

\section{Congruence-simple and subdirectly irreducible  matrix semirings}

In this section we deal with additively idempotent semirings $S$. We characterize the case when the matrix semiring  $\mathbf{M}_n(S)$ is congruence-simple provided that the corresponding ordering on $S$ is downwards directed (Theorem \ref{add_idem}). We also characterize the case when the matrix semiring  $\mathbf{M}_n(S)$ is subdirectly irreducible provided that $S$ has both the zero and the unity (Theorem \ref{subdirectly_irreducible_4}).

We start with a useful property of congruence-simple semirings $\mathbf{M}_n(S)$ that are additively idempotent.

\begin{proposition}\label{7.1}
 Let $S$ be an additively idempotent semiring. If $T=\mathbf{M}_n(S)$ is congru\-ence-simple for some $n\geq 2$ then for every $a,b,e\in S$, $a\neq b$ there are $c,d,f,g\in S$ such that $ca+fe\neq cb+fe$ and $ad+eg\neq bd+eg$.
\end{proposition}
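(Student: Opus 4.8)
The plan is to argue by contradiction. Suppose the first assertion fails, so there are $a,b,e\in S$ with $a\neq b$ such that $ca+fe=cb+fe$ for all $c,f\in S$. I would convert this identity in $S$ into a statement about $T=\mathbf{M}_n(S)$ and then manufacture a non-identical congruence on $T$ that is forced to be the full relation, contradicting congruence-simplicity.

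The key packaging step is to observe that $ca+fe$ is exactly the shape of an entry of a matrix product whose right factor carries $a$ and $e$ stacked in one column. Since $n\geq 2$, I would choose matrices $Y,Y'\in T$ that agree in every entry except the $(1,1)$-entry, where $Y_{11}=a$ and $Y'_{11}=b$, and with $Y_{21}=Y'_{21}=e$ (the remaining entries filled in arbitrarily but identically in $Y$ and $Y'$). For any $X\in T$ and any $i$, the $(i,1)$-entry of $XY$ equals $X_{i1}a+X_{i2}e+\sum_{k\geq 3}X_{ik}Y_{k1}$, while the corresponding entry of $XY'$ replaces only $X_{i1}a$ by $X_{i1}b$. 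The standing hypothesis, applied with $c=X_{i1}$ and $f=X_{i2}$, gives $X_{i1}a+X_{i2}e=X_{i1}b+X_{i2}e$, so these entries coincide; as all other columns of $Y$ and $Y'$ are equal, this yields $XY=XY'$ for every $X\in T$, whereas $Y\neq Y'$.

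I would then consider the relation $\sim$ on $T$ defined by $U\sim V$ if and only if $XU=XV$ for all $X\in T$. A routine check shows $\sim$ is a congruence: compatibility with addition and with left and right multiplication all reduce to associativity and distributivity. By the previous step $(Y,Y')\in{\sim}$, so $\sim$ is not the identity, and congruence-simplicity forces $\sim\,=T\times T$, i.e. $XU=XV$ for all $X,U,V\in T$. The main obstacle at this point is that $S$ need not have a zero, so one cannot isolate a single matrix entry by means of matrix units; instead I would restrict to the diagonal copy $\overline{S}$, where additive idempotency gives $\overline{s}\,\overline{t}=\overline{st}$. This yields $st=st'$ for all $s,t,t'\in S$, and taking $t=a$, $t'=b$ gives $ca=cb$ for every $c\in S$, contradicting Proposition~\ref{6.1}. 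The second assertion $ad+eg\neq bd+eg$ follows by the symmetric argument: one builds $Y,Y'$ differing in a single row, uses the right-indistinguishability congruence defined by $U\approx V$ if and only if $UX=VX$ for all $X\in T$, and again contradicts Proposition~\ref{6.1}.
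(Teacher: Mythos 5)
Your proof is correct, and its core is the same as the paper's: both arguments manufacture, from the assumed identity $ca+fe=cb+fe$, a pair of matrices $A\neq B$ (differing only in the $(1,1)$-entry, with $e$ placed so that left multiplication cannot distinguish them) satisfying $CA=CB$ for all $C$. The two proofs part ways only in how the contradiction is then extracted. The paper notes $|TT|\geq|\overline{S}\,\overline{S}|\geq 2$ and invokes Lemma~\ref{6.0} on $T$ directly: since alternative (ii) of that lemma fails for the pair $(A,B)$, alternative (i) forces $|T|=2$, which is impossible for $n\geq 2$. You instead make the underlying mechanism explicit: you verify that $U\sim V\iff XU=XV\ \forall X$ is a congruence, conclude $\sim=T\times T$ by simplicity, and then descend to $S$ through the diagonal copy $\overline{S}$ (using $\overline{s}\,\overline{t}=\overline{st}$, which needs the additive idempotency) to get $ca=cb$ for all $c$, contradicting Proposition~\ref{6.1}. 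Your route is slightly longer but more self-contained at this step, and it has the minor advantage of not needing all off-diagonal entries of $A,B$ to equal $e$ (only the $(2,1)$-entries matter); the paper's route is shorter because Lemma~\ref{6.0} packages the congruence argument for you. Both finishes ultimately rest on the same dichotomy from \cite{simple}, since Proposition~\ref{6.1} is itself proved from Lemma~\ref{6.0}, so there is no circularity and no gap.
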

\begin{proof}

 Consider that there are $a,b,e\in S$, $a\neq b$ such that for all $c,f\in S$ is $ca+fe=cb+fe$. Let $A=(a_{ij})\in T$ and $B=(b_{ij})\in T$ be matrices such that $a_{11}=a$, $b_{11}=b$ and $a_{ij}=b_{ij}=e$ for $(i,j)\neq(1,1)$. Then for $C=(c_{ij})\in T$ we have $(CA)_{i1}=c_{i1}a+(\sum_{k=2}^n c_{ik})e=c_{i1}b+(\sum_{k=2}^n c_{ik})e=(CB)_{i1}$ and $(CA)_{ij}=(\sum_{k=1}^n c_{ik})e=(CB)_{ij}$ for $j=2,\dots,n$ and $i=1,\dots, n$. Hence $A\neq B$ and $CA=CB$ for every $C\in T$.

 By Proposition \ref{6.0.1}, we have that $|SS|\geq 2$. Since $S$ is additively idempotent, $S\cong\overline{S}=\set{\overline{a}}{a\in S}$ and $\overline{S}$ is a subsemiring of $T$. Therefore $|TT|\geq |\overline{S}\overline{S}|\geq 2$. By Lemma \ref{6.0}, it now follows that $|T|=2$, a contradiction.

 In this way we have shown that for all  $a,b,e\in S$, $a\neq b$ there are $c,f\in S$ such that $ca+fe\neq cb+fe$.

 By an analogous argument it follows that for  all  $a,b,e\in S$, $a\neq b$ there are $d,g\in S$ such that $ad+eg\neq bd+eg$.
\end{proof}

\begin{corollary}\label{corollary_omega}
 Let $S$ be an additively idempotent semiring with the greatest element $\omega\in S$ of $S(+)$. If the semiring $\mathbf{M}_n(S)$ is congruence-simple for some $n\geq 2$, then the element $\omega$ is neither left nor right multiplicatively absorbing.
\end{corollary}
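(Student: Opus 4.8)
The plan is to argue by contradiction and to let Proposition~\ref{7.1} do the real work, since it already packages the left- and right-multiplication information we need. First I would record the only nontrivial consequence of $\omega$ being the greatest element of the semilattice $S(+)$: because the induced order is given by $x\leq y$ if and only if $x+y=y$, having $\omega$ greatest means precisely that $x+\omega=\omega$ for every $x\in S$. In other words, adding anything equal to $\omega$ collapses the sum to $\omega$. I also note that $S$ has at least two elements: by Proposition~\ref{6.0.1}(iii) we have $|SS|\geq 2$, so in particular $|S|\geq 2$, and we may fix a pair $a,b\in S$ with $a\neq b$ to feed into Proposition~\ref{7.1}.

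Next, suppose toward a contradiction that $\omega$ is right multiplicatively absorbing, i.e., $x\omega=\omega$ for all $x\in S$. I would apply Proposition~\ref{7.1} to the triple $a,b,e$ with $e=\omega$: it asserts the existence of $c,f\in S$ with $ca+f\omega\neq cb+f\omega$. But $f\omega=\omega$ by the absorbing assumption, and then $ca+f\omega=ca+\omega=\omega$ and likewise $cb+f\omega=\omega$, so the two sides coincide, a contradiction. Hence $\omega$ cannot be right multiplicatively absorbing.

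The left case is entirely symmetric. Assuming $\omega x=\omega$ for all $x\in S$, I would use the second half of Proposition~\ref{7.1}, which supplies $d,g\in S$ with $ad+eg\neq bd+eg$ for $e=\omega$; here $eg=\omega g=\omega$, and again $ad+\omega=\omega=bd+\omega$ forces equality, a contradiction. This rules out left absorption as well, and the two cases together give the statement.

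I do not expect a genuine obstacle, since Proposition~\ref{7.1} is exactly tailored to this situation; the corollary is really just the observation that the hypothesis ``$\omega$ absorbing'' trivializes the inequalities that Proposition~\ref{7.1} guarantees. The only points needing care are the two bookkeeping facts used silently above: that ``greatest element of $S(+)$'' must be read additively as $x+\omega=\omega$ (not as some multiplicative maximality), and that congruence-simplicity of $\mathbf{M}_n(S)$ forces $|S|\geq 2$ via Proposition~\ref{6.0.1}(iii), so that a pair $a\neq b$ is actually available. Both are immediate from the definitions.
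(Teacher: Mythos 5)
Your proposal is correct and follows essentially the same route as the paper: assume $\omega$ is absorbing on one side, feed $e=\omega$ into Proposition~\ref{7.1}, and observe that $f\omega=\omega$ (resp.\ $\omega g=\omega$) collapses both sides of the guaranteed inequality to $\omega$, a contradiction. Your extra remark that $|S|\geq 2$ is needed to have a pair $a\neq b$ available is a small point of care the paper leaves implicit.
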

\begin{proof}
 Assume, for contrary, that $\omega$ is left multiplicatively absorbing. Then for every $a,b\in S$, $a\neq b$ and every $c,f\in S$ we have $ca+f\omega=ca+\omega=\omega=cb+
 \omega=cb+f\omega$. This is a contradiction with Proposition \ref{7.1}.
 
 The other case follows from an analogous argument.     
\end{proof}

The following lemma is essential for finding a special couple of matrices in any non-identical congruence on $\mathbf{M}_n(S)$ provided that the semiring $S$ has the property from the Proposition \ref{7.1}.

\begin{lemma}\label{7.2}
 Let $S$ be an additively idempotent  semiring such that for every $a,b,e\in S$, $a\neq b$ there are $c,d,f,g\in S$ such that $ca+fe\neq cb+fe$ and $ad+eg\neq bd+eg$.

 Let $n\geq 2$ and $i_0,j_0\in \{1,\dots,n\}$. Let $\varrho$ be a congruence on $\mathbf{M}_n(S)$ and $(A,B)\in\varrho$ for some $A=(a_{ij}),B=(b_{ij})\in \mathbf{M}_n(S)$, $A\neq B$.   Then:
\begin{enumerate}
 \item[(i)] If $a_{i_0 j_0}\neq b_{i_0 j_0}$ then there are $A'=(a'_{ij}),B'=(b'_{ij})\in \mathbf{M}_n(S)$, $A'\neq B'$ and  $e'\in S$ such that $(A',B')\in\varrho$ and $a'_{ij}=e'=b'_{ij}$ for all $(i,j)\neq(i_0,j_0)$.
 \item[(ii)] If $a_{ij}=e=b_{ij}$ for some  $e\in S$ and all $(i,j)\neq(i_0,j_0)$ then there are $A'=(a'_{ij}),B'=(b'_{ij})\in \mathbf{M}_n(S)$, $A'\neq B'$ and  $a',b',e'\in S$, $a'\neq b'$  such that $(A',B')\in\varrho$ and $a'_{i j_0}=a'$, $b'_{i j_0}=b'$  and  $a'_{ij}=e'=b'_{ij}$ for all $i$ and $j\neq j_0$.
 \item[(iii)] If there are $a,b,e\in S$  such that $a_{i j_0}=a$, $b_{i j_0}=b$  and  $a_{ij}=e=b_{ij}$ for all $i$ and $j\neq j_0$ then there are $a',b'\in S$, $a'\neq b'$ such that $(\overline{a'},\overline{b'})\in\varrho$.
\end{enumerate}
In particular, there are $a,b\in S$, $a\neq b$ such that $(\overline{a},\overline{b})\in\varrho$.
\end{lemma}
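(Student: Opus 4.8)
The plan is to prove the three assertions (i)--(iii) separately and then chain them. Starting from the given pair $(A,B)\in\varrho$ with $A\neq B$, I would pick a position $(i_0,j_0)$ with $a_{i_0j_0}\neq b_{i_0j_0}$, apply (i) to replace $(A,B)$ by a pair that equals a single constant off $(i_0,j_0)$, feed that into the hypothesis of (ii) to obtain a pair that is constant along the whole column $j_0$, and finally feed the result into (iii) to land on a pair $(\overline{a},\overline{b})\in\varrho$ with $a\neq b$. The outputs of (i) and (ii) are tailored to match exactly the inputs of (ii) and (iii), so the ``In particular'' clause is immediate once the three parts are in hand. The recurring difficulty is that $S$ is only additively idempotent, with no zero and no unity available, so the usual trick of isolating a single entry by multiplying with matrix units is unavailable; every manipulation must instead be built from additions and from multiplications by matrices whose columns (or rows) are constant.

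For (i) I would use addition alone, exploiting that $(S,+)$ is a semilattice. Writing $x\leq y$ for $x+y=y$, I would set $e'=\sum_{(k,l)\neq(i_0,j_0)}(a_{kl}+b_{kl})$, the join of all off-position entries, and add the matrix $M$ with $m_{ij}=e'$ for $(i,j)\neq(i_0,j_0)$. Since $e'\geq a_{ij},b_{ij}$ at every such position, idempotency forces $a_{ij}+e'=e'=b_{ij}+e'$, so $A+M$ and $B+M$ both equal the constant $e'$ off $(i_0,j_0)$. It remains to choose $m_{i_0j_0}$ so that the entries stay distinct there: because $a_{i_0j_0}\neq b_{i_0j_0}$ they cannot satisfy both $a_{i_0j_0}\leq b_{i_0j_0}$ and $b_{i_0j_0}\leq a_{i_0j_0}$, so taking $m_{i_0j_0}$ to be whichever of the two is not above the other keeps $a_{i_0j_0}+m_{i_0j_0}\neq b_{i_0j_0}+m_{i_0j_0}$. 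Then $(A',B')=(A+M,B+M)$ is the required pair; note (i) uses no assumption on $S$ beyond idempotency.

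For (ii) and (iii) the hypothesis on $S$ enters. In (ii), with $A,B$ constant $=e$ off $(i_0,j_0)$ and $a_{i_0j_0}=a\neq b=b_{i_0j_0}$, I would left-multiply by the matrix $C$ whose column $i_0$ is constant $=c$ and whose other columns are constant $=f$, where $c,f$ are furnished by the hypothesis so that $ca+fe\neq cb+fe$; collapsing the $n-1$ equal summands by idempotency gives $(CA)_{ij_0}=ca+fe$ and $(CB)_{ij_0}=cb+fe$ in every row, while off column $j_0$ both products equal the constant $(c+f)e$, which is the asserted form. Symmetrically, in (iii) (applied with $a\neq b$, the situation produced by (ii)), with column $j_0$ constant $=a$ resp.\ $=b$ and everything else $=e$, I would right-multiply by the matrix $D$ whose row $j_0$ is constant $=d$ and whose other rows are constant $=g$, with $ad+eg\neq bd+eg$ from the hypothesis; idempotency then yields $AD=\overline{ad+eg}$ and $BD=\overline{bd+eg}$, so $(\overline{a'},\overline{b'})\in\varrho$ with $a'=ad+eg\neq bd+eg=b'$.

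I expect the main obstacle to be engineering the multiplier matrices in (ii) and (iii) so that the single available instance of the hypothesis property applies uniformly in every row (resp.\ column): the $n-1$ spectator entries must all be pushed into one summand $fe$ (resp.\ $eg$), which is exactly what additive idempotency permits and what dictates the constant-column shape of $C$ (the constant-row shape of $D$). The delicate point in (i) is orthogonal but easy to overlook, namely confirming that the difference at $(i_0,j_0)$ genuinely survives the constantization step; this is secured by the comparability argument above and uses nothing beyond $a_{i_0j_0}\neq b_{i_0j_0}$ and idempotency.
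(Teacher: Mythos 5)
Your proposal is correct and follows essentially the same route as the paper's proof: additive constantization off $(i_0,j_0)$ for (i), left multiplication by a matrix with column $i_0$ constant $=c$ and the remaining columns constant $=f$ for (ii), right multiplication by a matrix with row $j_0$ constant $=d$ and the remaining rows constant $=g$ for (iii), and then chaining the three parts. The only difference is that you spell out the comparability argument guaranteeing an element $g$ with $a_{i_0j_0}+g\neq b_{i_0j_0}+g$, which the paper asserts without detail.
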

\begin{proof}
(i) Assume the condition for $A$ and $B$. Then there are $e',g\in S$ such that $a_{i_0 j_0}+g\neq b_{i_0 j_0}+g$ and $a_{i j}+e'=e'=b_{i j}+e'$ for all $(i,j)\neq (i_0,j_0)$. Let $E=(e_{ij})$ be a matrix such that $e_{i_0 j_0}=g$ and $e_{ij}=e'$ for all $(i,j)\neq (i_0,j_0)$. Then we set $(A',B')=(A+E,B+E)\in\varrho$.

(ii) Assume the condition for $A$ and $B$. Since $A\neq B$, we have  $a_{i_0j_0}\neq b_{i_0j_0}$  By the assumption on $S$ there are $c,f\in S$ such that $ca_{i_0j_0}+fe\neq cb_{i_0j_0}+fe$. Let $E=(e_{ij})$ be a matrix such that $e_{i i_0}=c$ and $e_{ij}=f$ for all $i$ and $j\neq i_0$. Then we set $(A',B')=(EA,EB)\in\varrho$.

(iii) Assume the condition for $A$ and $B$. Since $A\neq B$, we have $a\neq b$.  By the assumption on $S$ there are $d,g\in S$ such that $a'=ad+eg\neq bd+eg=b'$. Let $E=(e_{ij})$ be a matrix such that $e_{j_0 j}=d$ and $e_{ij}=g$ for all $i\neq j_0$ and $j$. Then, clearly, $(\overline{a'},\overline{b'})=(AE,BE)\in\varrho$.
\end{proof}

\begin{remark}\label{hat}
  Let $S$ be an additively idempotent semiring. For a  congruence $\varrho$ on $\mathbf{M}_n(S)$ we set a relation $\widehat{\varrho}$ on $S$  as $(x,y)\in\widehat{\varrho}$ if and only if $(\overline{x},\overline{y})\in\varrho$ for $x,y\in S$.

  Since the map $\nu: S\to \mathbf{M}_n(S)$, $\nu(x)=\overline{x}$ is an injective semiring homomorphism, the relation  $\widehat{\varrho}$ is a congruence on $S$.
\end{remark}

Now we may present our first result on congruence-simple matrix semirings.

\begin{theorem}\label{add_idem}
 Let $S$ be an additively idempotent semiring such that for all $a,b\in S$ there is $c\in S$ such that $c\leq a$ and $c\leq b$. For $n\geq 2$ the following are equivalent:
\begin{enumerate}
  \item[(i)] $\mathbf{M}_n(S)$ is congruence-simple.
  \item[(ii)] $S$ is congruence-simple and for every $a,b,e\in S$, $a\neq b$ there are $c,d,f,g\in S$ such that $ca+fe\neq cb+fe$ and $ad+eg\neq bd+eg$.
\end{enumerate}
\end{theorem}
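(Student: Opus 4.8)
The plan is to prove the two implications separately; the forward direction is essentially immediate, and the reverse direction carries the content. For (i)$\Rightarrow$(ii), assume $\mathbf{M}_n(S)$ is congruence-simple. Then $S$ is congruence-simple by Proposition~\ref{6.0.1}(i), and the arithmetic condition on $S$ is exactly the conclusion of Proposition~\ref{7.1}, which applies because $S$ is additively idempotent and $\mathbf{M}_n(S)$ is congruence-simple for some $n\ge 2$. The directedness hypothesis is not needed here.

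For (ii)$\Rightarrow$(i), let $\varrho$ be a congruence on $T=\mathbf{M}_n(S)$ that is not the identity; I must show $\varrho=T\times T$. First I would feed the data into Lemma~\ref{7.2}: its hypothesis is precisely the arithmetic condition in (ii), so starting from any $(A,B)\in\varrho$ with $A\neq B$, its final clause produces elements $a\neq b$ with $(\overline{a},\overline{b})\in\varrho$. By Remark~\ref{hat} the induced relation $\widehat{\varrho}$ is a congruence on $S$, and it is non-identical since $(a,b)\in\widehat{\varrho}$; as $S$ is congruence-simple, $\widehat{\varrho}=S\times S$. Hence all constant matrices lie in a single $\varrho$-class: $(\overline{x},\overline{y})\in\varrho$ for every $x,y\in S$.

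The remaining and main step is to upgrade this to $\varrho=T\times T$, and here the downward-directedness enters. I would first record that one may freely add a constant matrix within a class, namely $(A,A+\overline{t})\in\varrho$ for all $A\in T$ and $t\in S$. Indeed, by downward-directedness the finitely many entries of $A$ have a common lower bound $s$, so $s\le a_{ij}$ gives $a_{ij}+s=a_{ij}$ and hence $A+\overline{s}=A$; adding $A$ to $(\overline{s},\overline{t})\in\varrho$ then yields $(A,A+\overline{t})\in\varrho$. The decisive observation is that the total entry-sum $\sigma=\sum_{i,j}a_{ij}$ is, by additive idempotency, an upper bound of every entry, so $a_{ij}+\sigma=\sigma$ and therefore $A+\overline{\sigma}=\overline{\sigma}$. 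Taking $t=\sigma$ above gives $(A,\overline{\sigma})\in\varrho$, so \emph{every} matrix is $\varrho$-related to a constant matrix. Since all constant matrices already lie in one class, for arbitrary $A,B\in T$ the chain $(A,\overline{\sigma_A}),(\overline{\sigma_A},\overline{\sigma_B}),(\overline{\sigma_B},B)\in\varrho$ gives $(A,B)\in\varrho$; thus $\varrho=T\times T$ and $\mathbf{M}_n(S)$ is congruence-simple.

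I expect this collapse to be the only genuine obstacle: the passage from ``all constant matrices are identified'' to ``all matrices are identified''. The mechanism rests on two opposite uses of the additive order—downward-directedness furnishes a lower bound that makes a constant harmlessly addable to $A$, while the entry-sum furnishes, at no cost, an upper bound that swallows $A$ into a constant. The two routine facts I would verify along the way are that pairwise downward-directedness yields a common lower bound for any finite set of entries (by iteration, using transitivity of $\le$), and that $a_{ij}\le\sigma$ for each entry, which is immediate from $x\le x+y$ in an additively idempotent semiring.
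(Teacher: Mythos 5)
Your proposal is correct and follows essentially the same route as the paper: Propositions \ref{6.0.1} and \ref{7.1} for (i)$\Rightarrow$(ii), then Lemma \ref{7.2} and Remark \ref{hat} to collapse all constant matrices into one class, and finally the squeeze $\overline{a}\leq A\leq\overline{b}$ (lower bound from directedness, upper bound from the entry sum) to absorb every matrix into a constant one. The only difference is that you spell out explicitly the upper bound $\sigma=\sum_{i,j}a_{ij}$ and the final chaining step, which the paper leaves as ``it easily follows''.
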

\begin{proof}
 (i)$\Rightarrow$(ii): Follows from Proposition \ref{6.0.1} and Proposition \ref{7.1}.

 (ii)$\Rightarrow$(i): Let $\varrho$ be a non-identical congruence on $\mathbf{M}_n(S)$.    By Lemma \ref{7.2}, there are $a,b\in S$, $a\neq b$ such that $(\overline{a},\overline{b})\in\varrho$. Hence, by Remark \ref{hat}, the congruence $\widehat{\varrho}$ on $S$ is non-identical. Since $S$ is congruence-simple, it follows that $\widehat{\varrho}=S\times S$. It follows that $\overline{S}\times\overline{S}\sub\varrho$.

  By the assumption on the ordering on the semiring $S$, we obtain that for every $A\in \mathbf{M}_n(S)$ there are $a,b\in S$ such that $\overline{a}\leq A\leq\overline{b}$. Hence $(A,\overline{b})=(A+\overline{a},A+\overline{b})\in\varrho$. Now it easily follows that there is only a single congruence class of $\varrho$ and therefore $\varrho=\mathbf{M}_n(S)\times \mathbf{M}_n(S)$. Thus, the semiring $\mathbf{M}_n(S)$ is congruence-simple.
\end{proof}

A natural generalization of congruence-simple semirings are the subdirectly irreducible ones. They allow to infer similar properties like for the congruence-simple case (compare, e.g., Proposition \ref{subdirectly_irreducible} and Proposition \ref{6.0.1}.)

The following proposition gives a necessary condition for the semiring $\mathbf{M}_n(S)$ to be subdirectly irreducible.   
 To find an equivalent condition seems to be difficult in general. A partial answer will be given in Proposition \ref{subdirectly_irreducible_2} or in Theorem \ref{subdirectly_irreducible_4}.

\begin{proposition}\label{subdirectly_irreducible}
 Let the semiring $T=\mathbf{M}_n(S)$ be subdirectly irreducible and $n\geq 2$. Then:
 \begin{enumerate}
  \item[(i)]  $S$ is subdirectly irreducible.
  \item[(ii)] $S$ contains no bi-absorbing element.
  \item[(iii)] $|SS|\geq 2$.
 \end{enumerate}
\end{proposition}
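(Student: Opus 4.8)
The plan is to use throughout that a semiring is subdirectly irreducible exactly when the intersection of all its non-identity congruences is again non-identity, and to transport congruences between $S$ and $T=\mathbf{M}_n(S)$ by the entry-wise construction. For a congruence $\sigma$ on $S$ put $M_n(\sigma)=\{(A,B)\in T\times T : (a_{ij},b_{ij})\in\sigma \text{ for all } i,j\}$. One checks directly that $M_n(\sigma)$ is a congruence on $T$, that the map $\sigma\mapsto M_n(\sigma)$ preserves arbitrary intersections, and that $M_n(\sigma)$ equals the identity congruence $\Delta_T$ if and only if $\sigma=\Delta_S$. Since a subdirectly irreducible $T$ has at least two elements, so does $S$, hence $S$ carries the non-identity congruence $S\times S$ and the family of non-identity congruences of $S$ is non-empty. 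Let $\mu$ denote the monolith of $T$.

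For (i) I would note that every non-identity $\sigma$ on $S$ gives a non-identity congruence $M_n(\sigma)$ on $T$, so $\mu\subseteq M_n(\sigma)$. Intersecting over all such $\sigma$ and using that $M_n$ preserves intersections yields $\mu\subseteq\bigcap_{\sigma\neq\Delta_S}M_n(\sigma)=M_n\bigl(\bigcap_{\sigma\neq\Delta_S}\sigma\bigr)$. As $\mu\neq\Delta_T$, the right-hand side is non-identity, whence $\bigcap_{\sigma\neq\Delta_S}\sigma\neq\Delta_S$. This intersection lies inside every non-identity congruence of $S$ and is itself non-identity, so it is the least such; thus $S$ is subdirectly irreducible.

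For (iii) I would argue by contradiction. Suppose $|SS|=1$, say $ab=c$ for all $a,b\in S$. Then every product in $T$ equals the constant matrix $\overline{nc}$, so multiplication on $T$ is constant and, consequently, every congruence of the commutative semigroup $(T,+)\cong (S,+)^{n^2}$ is automatically a semiring congruence (if $A\,\theta\,A'$ and $B\,\theta\,B'$ then $AB=\overline{nc}=A'B'$). For each position $p$ let $\psi_p$ identify two matrices precisely when they agree in every position other than $p$; each $\psi_p$ is a non-identity congruence because $|S|\ge 2$, while $\bigcap_p\psi_p=\Delta_T$ because $n^2\ge 2$. Hence $\mu\subseteq\bigcap_p\psi_p=\Delta_T$, contradicting $\mu\neq\Delta_T$, so $|SS|\ge 2$.

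For (ii) I would again argue by contradiction: assume $o=o_S$ is bi-absorbing and fix $s\in S$ with $s\neq o$. For a position $p$ let $K_p$ be the set of matrices that equal $o$ off $p$ and whose $p$-entry lies in $\{s\}\cup(s+S)$, and set $\theta_p=(K_p\times K_p)\cup\Delta_T$. The crucial point is that any $M\in K_p$ satisfies $MD=DM=\overline{o}$ for every $D\in T$, because its single possibly-non-$o$ entry is annihilated using that $o$ is multiplicatively absorbing and $x+o=o$ for all $x$; together with the closure of $K_p$ under adding an arbitrary matrix, this shows each $\theta_p$ is a proper non-identity congruence. Taking $p=(1,1)$ and $p=(1,2)$ (possible since $n\ge 2$) then produces two non-identity congruences whose intersection is $\Delta_T$, since a common non-reflexive pair would consist of matrices equal to $o$ off both $(1,1)$ and $(1,2)$, i.e.\ equal to $\overline{o}$; this contradicts subdirect irreducibility. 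I expect the main obstacle to lie precisely in this construction for (ii): recognising that lowering a single entry of $\overline{o}$ still yields an element that multiplicatively absorbs to $\overline{o}$, and verifying the additive closure of $K_p$ when $S$ need not possess a zero (which is why the class must be taken as $\{s\}\cup(s+S)$ rather than a clean two-element set).
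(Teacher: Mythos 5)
Your proof is correct and follows essentially the same strategy as the paper's: entrywise transport of congruences for (i), and for (ii) and (iii) a family of non-identity congruences on $\mathbf{M}_n(S)$ whose intersection is the identity. The only divergence is in (ii), where the paper uses the $n$ congruences collapsing the set $I_i$ of matrices whose $i$-th \emph{column} is constantly $o$, whereas you use single-position supports; note also that your restriction of the $p$-entry to $\{s\}\cup(s+S)$ is harmless but unnecessary, since the set of all matrices equal to $o$ off $p$ is already closed under adding an arbitrary matrix (every off-$p$ entry stays $o$ because $o$ is additively absorbing).
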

\begin{proof}
 (i) For a congruence $\varrho$  on $S$ set a relation $\widetilde{\varrho}$ on $T$ as  $(A,B)\in\tilde{\varrho}$ if and only if $(a_{ij},b_{ij})\in\varrho$ for all pairs $(i,j)$. Clearly, $\widetilde{\varrho}$ is a congruence on $T$ and $\varrho\neq id_{S}$ if and only if $\widetilde{\varrho}\neq id_{T}$.

 Now, let $\mathcal{C}$ be the set of all non-identical congruences on $S$ and $\varrho_{0}=\bigcap\set{\varrho}{\varrho\in \mathcal{C}}$. Clearly, $\widetilde{\varrho}_0=\bigcap\set{\widetilde{\varrho}}{\varrho\in \mathcal{C}}$.

Therefore, if the semiring $T$ is subdirectly irreducible, the congruence $\widetilde{\varrho}_0$ is non-identical. Hence the congruence $\varrho_0$ is non-identical and the semiring $S$ is thus subdirectly irreducible too.

(ii) Assume, on the contrary, that $o\in S$ is a bi-absorbing element. For $i=1,\dots,n$ let $I_i$ be the set of all matrices in $T$ whose $i$-th column contains only the element $o$. Let $\sigma_i$ be a relation on $T$ such that $(A,B)\in\sigma_i$ iff either $A=B$ or $A,B\in I_i$. It is easy to verify that $\sigma_i$ is a non-identical congruence on $T$. Since, clearly, $\bigcap_{i=1}^n\sigma_i=id_{T}$, we obtain a contradiction with $T$ being subdirectly irreducible. It follows that $S$ has no bi-absorbing element.

(iii) Assume, on the contrary, that $|SS|=1$. Then $SS=\{\omega\}$ for some $\omega\in S$ and it holds that $\omega$ is multiplicatively absorbing and $\omega+\omega=\omega^2+\omega^2=\omega(\omega+\omega)=\omega$. Therefore $|TT|=1$. Now, for each couple $(i,j)$ consider the relation $\tau_{(i,j)}$ on $T$ defined as $(A,B)\in \tau_{(i,j)}$ iff the $(i,j)$-entries of $A$ and $B$ are equal. Since $|TT|=1$, the relations $\tau_{(i,j)}$ are congruences on $T$. Now, $\bigcap_{i,j=1}^n\tau_{(i,j)}=id_{T}$, a contradiction with $T$ being subdirectly irreducible. We conclude that $|SS|\geq 2$.
\end{proof}

\begin{proposition}\label{subdirectly_irreducible_2}
 Let $S$ be an additively idempotent  semiring such that for every $a,b,e\in S$, $a\neq b$ there are $c,d,f,g\in S$ such that $ca+fe\neq cb+fe$ and $ad+eg\neq bd+eg$. For $n\geq 2$ the following are equivalent:
 \begin{enumerate}
  \item[(i)] $\mathbf{M}_n(S)$ is subdirectly irreducible.
  \item[(ii)] $S$ is subdirectly irreducible.
 \end{enumerate}
\end{proposition}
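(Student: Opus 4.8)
I will prove Proposition \ref{subdirectly_irreducible_2} by establishing both implications, using the machinery already built up in the excerpt. The implication (i)$\Rightarrow$(ii) is immediate from Proposition \ref{subdirectly_irreducible}(i), which states that if $\mathbf{M}_n(S)$ is subdirectly irreducible for $n\geq 2$, then so is $S$. The real content lies in (ii)$\Rightarrow$(i).

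For (ii)$\Rightarrow$(i), the strategy is to exhibit a least non-identical congruence on $T=\mathbf{M}_n(S)$, or equivalently to show that every non-identical congruence $\varrho$ on $T$ contains a fixed ``small'' non-identical congruence. The key observation is that the hypothesis on $S$ is exactly the hypothesis of Lemma \ref{7.2}. Hence for \emph{any} non-identical congruence $\varrho$ on $T$, Lemma \ref{7.2} produces elements $a,b\in S$ with $a\neq b$ and $(\overline{a},\overline{b})\in\varrho$. By Remark \ref{hat}, the induced relation $\widehat{\varrho}$ is then a non-identical congruence on $S$. Since $S$ is subdirectly irreducible, $S$ has a least non-identical congruence, call it $\mu$; thus $\mu\subseteq\widehat{\varrho}$, which translates back to $\set{(\overline{x},\overline{y})}{(x,y)\in\mu}\subseteq\varrho$.

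The plan is then to use the construction $\widetilde{\varrho}$ from the proof of Proposition \ref{subdirectly_irreducible}(i): for a congruence $\nu$ on $S$, define $\widetilde{\nu}$ on $T$ by $(A,B)\in\widetilde{\nu}$ iff $(a_{ij},b_{ij})\in\nu$ for all $i,j$. I will show that $\widetilde{\mu}$ is the least non-identical congruence on $T$, where $\mu$ is the least non-identical congruence on $S$. First, $\widetilde{\mu}$ is non-identical since $\mu$ is. It remains to prove that $\widetilde{\mu}\subseteq\varrho$ for every non-identical congruence $\varrho$ on $T$. Given such a $\varrho$, the argument above yields $\mu\subseteq\widehat{\varrho}$, i.e. $(\overline{x},\overline{y})\in\varrho$ whenever $(x,y)\in\mu$. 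The task is to propagate this from the constant matrices $\overline{x}$ to arbitrary pairs $(A,B)$ with $(a_{ij},b_{ij})\in\mu$ entrywise. For this I would take a pair $(A,B)\in\widetilde{\mu}$ that differs in a single entry $(i_0,j_0)$, say $a_{i_0j_0}=x$, $b_{i_0j_0}=y$ with $(x,y)\in\mu$ and $A,B$ agreeing elsewhere, and realize the single-entry difference by multiplying the known pair $(\overline{x},\overline{y})$ on the left and right by suitable matrix units (or matrices with a single non-trivial row/column) to place the $(x,y)$ discrepancy at position $(i_0,j_0)$, then adding a constant matrix to fix the other entries; closure of $\varrho$ under the matrix operations keeps the resulting pair in $\varrho$. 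A general pair in $\widetilde{\mu}$ is reached by chaining finitely many such single-entry steps through transitivity of $\varrho$.

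The main obstacle I anticipate is the single-entry realization step: unlike the ring case there are no additive inverses, so I cannot simply subtract off unwanted entries, and I must instead arrange, using the additive idempotency of $S$ and the flexibility in choosing the auxiliary matrices, that the off-diagonal contributions agree on both sides of the pair while the targeted entry carries the $\mu$-related discrepancy. This is precisely the kind of bookkeeping that Lemma \ref{7.2}(i)--(iii) performs in the reverse direction, so I expect to either invoke a symmetric version of that lemma or redo its matrix-unit computations directly; once that step is in hand, the non-identicality of $\widetilde{\mu}$ and the entrywise-chaining finish the proof that $T$ is subdirectly irreducible.
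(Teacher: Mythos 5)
Your implication (i)$\Rightarrow$(ii) and the first half of your (ii)$\Rightarrow$(i) coincide with the paper's argument: use Lemma \ref{7.2} to place a pair $(\overline{a},\overline{b})$ with $a\neq b$ into an arbitrary non-identical congruence $\varrho$ on $T=\mathbf{M}_n(S)$, conclude via Remark \ref{hat} that $\widehat{\varrho}$ is non-identical, and hence that $\mu\subseteq\widehat{\varrho}$ for the least non-identical congruence $\mu$ of $S$, i.e.\ $(\overline{x},\overline{y})\in\varrho$ for every $(x,y)\in\mu$. At that point you are already done if you simply take $\varrho_0$ to be the congruence on $T$ generated by one pair $(\overline{a_0},\overline{b_0})$ with $(a_0,b_0)\in\mu$, $a_0\neq b_0$: it is non-identical because $S\cong\overline{S}$, and it is contained in every non-identical $\varrho$ by what you have just shown. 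That is exactly the paper's proof.

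Instead you aim for the stronger claim that the entrywise congruence $\widetilde{\mu}$ is the least non-identical congruence on $T$, and the propagation step this requires --- from the constant pairs $(\overline{x},\overline{y})$ to arbitrary pairs differing in a single entry --- is a genuine gap. Your sketch relies on ``matrix units (or matrices with a single non-trivial row/column)'', but the hypotheses provide no zero element in $S$, so such matrices need not exist, and additive idempotency alone gives you no way to confine a product or a sum to one position. Moreover, if this step did go through it would yield $\varrho_0=\widetilde{\mu}$ and hence $\widehat{\varrho_0}=\mu$, which is precisely what the paper poses as an open question in the remark immediately following the proposition --- strong evidence that this is not routine bookkeeping. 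The repair is not to fix the propagation but to drop it: subdirect irreducibility only asks you to exhibit \emph{some} least non-identical congruence, not to identify it as $\widetilde{\mu}$.
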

\begin{proof}
 (i)$\Rightarrow$(ii):  Follows from Proposition \ref{subdirectly_irreducible}.

  (ii)$\Rightarrow$(i): Assume that $\sigma_0$ is the least non-identical congruence on $S$. Then there are $a_0,b_0\in S$, $a_0\neq b_0$ such that $(a_0,b_0)\in\sigma_0$. Let $\varrho_0$ be the smallest congruence on $\mathbf{M}_n(S)$ containing $(\overline{a_0},\overline{b_0})$. We show that $\varrho_0$ is the least non-identical congruence on $\mathbf{M}_n(S)$.

  Indeed, let $\varrho$ be a non-identical congruence on $\mathbf{M}_n(S)$.  By Lemma \ref{7.2}, there are $a,b\in S$ such that $a\neq b$ and $(\overline{a},\overline{b})\in\varrho$. Hence, by Remark \ref{hat}, the congruence $\widehat{\varrho}$ on $S$ is non-identical.  Therefore, $(a_0,b_0)\in\sigma_0\sub\widehat{\varrho}$ and we obtain that $(\overline{a_0},\overline{b_0})\in \varrho$, by the definition of $\widehat{\varrho}$. Thus $\varrho_0\sub\varrho$ and it follows that the semiring $\mathbf{M}_n(S)$ is subdirectly irreducible.
\end{proof}

\begin{remark}
 A natural question suggests itself: Is the equality $\widehat{\varrho_0}=\sigma_0$ true in the proof of Proposition \ref{subdirectly_irreducible_2}?
\end{remark}

The following characterization provides a surprisingly simple characterization of subdirectly irreducible (additively idempotent) matrix semirings that contain a unity and a zero element. A more detailed characterization of this case will be given for special classes of semiring in Theorems \ref{subdirectly_irreducible_3} and \ref{corollary_GMV}.   

\begin{theorem}\label{subdirectly_irreducible_4}
 Let $S$ be an additively idempotent  semiring with a zero element $0_S$ and a unity $1_S$. For $n\geq 2$ the following are equivalent:
 \begin{enumerate}
  \item[(i)] $\mathbf{M}_n(S)$ is subdirectly irreducible.
  \item[(ii)] $S$ is subdirectly irreducible.
 \end{enumerate}
\end{theorem}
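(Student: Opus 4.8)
The plan is to reduce the claim to Proposition~\ref{subdirectly_irreducible_2}, whose only standing hypothesis on $S$ (beyond additive idempotency) is the combinatorial separation condition: for all $a,b,e\in S$ with $a\neq b$ there exist $c,d,f,g\in S$ with $ca+fe\neq cb+fe$ and $ad+eg\neq bd+eg$. The implication (i)$\Rightarrow$(ii) requires no extra work, since it is already delivered by Proposition~\ref{subdirectly_irreducible}(i). So the entire task is to show that the presence of a zero $0_S$ and a unity $1_S$ forces the separation condition, after which Proposition~\ref{subdirectly_irreducible_2} yields (ii)$\Rightarrow$(i) verbatim.

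First I would verify the separation condition directly. Fix $a,b,e\in S$ with $a\neq b$ and take $c=d=1_S$ and $f=g=0_S$. Since $0_S$ is multiplicatively absorbing we have $fe=0_S e=0_S$ and $eg=e\,0_S=0_S$, while $1_S$ being a unity gives $ca=a$, $cb=b$, $ad=a$, $bd=b$. Because $0_S$ is additively neutral, $ca+fe=a+0_S=a$ and $cb+fe=b+0_S=b$, which differ by hypothesis; the identical computation gives $ad+eg=a\neq b=bd+eg$. Thus the separation condition holds for every such $S$, using no subdirect irreducibility and only the defining properties of $0_S$ and $1_S$.

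With the condition in hand I would simply cite Proposition~\ref{subdirectly_irreducible_2}: under additive idempotency together with the separation condition, $\mathbf{M}_n(S)$ is subdirectly irreducible if and only if $S$ is. Combined with the first paragraph this closes both implications.

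The honest assessment is that there is essentially no obstacle here: all the genuine content --- producing the least non-identical congruence on $\mathbf{M}_n(S)$ from that on $S$ via the embedding $x\mapsto\overline{x}$ and Lemma~\ref{7.2} --- was already carried out in Proposition~\ref{subdirectly_irreducible_2}. The only new observation is that a zero together with a unity trivializes the separation hypothesis, since one can annihilate the $e$-terms with $0_S$ and pass the coefficients through with $1_S$. Hence Theorem~\ref{subdirectly_irreducible_4} is a specialization of the earlier proposition to a setting in which its hypothesis is automatic.
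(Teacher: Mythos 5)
Your proposal is correct and is essentially identical to the paper's own proof: the authors also verify the separation hypothesis of Proposition~\ref{subdirectly_irreducible_2} by taking $c=d=1_S$ and $f=g=0_S$, and then invoke that proposition for both implications. No further comment is needed.
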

\begin{proof}
It is enough to show that the assumption in Proposition \ref{subdirectly_irreducible_2} if fulfilled for $S$. Let $a,b,e\in S$ be such that $a\neq b$. For $c=d=1_S$ and $f=g=0_S$ we then have $ca+fe=a\neq b=cb+fe$ and $ac+eg=a\neq b=bc+eg$. Now the assertion follows immediately from Proposition \ref{subdirectly_irreducible_2}.
\end{proof}

\section{The commutative and the finite cases}

In this section we fully characterize the case when the matrix semiring $\mathbf{M}_n(S)$ is congruence-simple  provided that the (additively idempotent) semiring $S$ is either commutative or finite. This characterization and classification is based on result from \cite{simple_comm,zumbragel}. A similar classification of subdirectly irreducible semirings seems to be a quite complicated problem and even the cases when such an (additively idempotent) semiring $S$ is commutative or finite are far from being understood (see, e.g., \cite{vechtomov-petrov}).

First we deal with the commutative case of $S$. Let us recall a classification from \cite{simple_comm}.

\begin{theorem}\cite[Theorem 10.1]{simple_comm}\label{comm}
 Let $S(+,\cdot)$ be a commutative congruence-simple semiring that is additively idempotent, $|SS|\geq 2$ and $S$ has no bi-absorbing element. Then $S$ is isomorphic to one of the following cases:
 \begin{enumerate}
  \item[(i)] a two-element lattice $L_2(\vee,\wedge)$;
  \item[(ii)] a subsemiring $A$ of the semiring $\R(\max,+)$ such that $A\cap \R^{+}\neq\emptyset$ and $A\cap \R^{-}\neq\emptyset$.
 \end{enumerate}
 (Notice, that $\R(\min,+)\cong\R(\max,+)$ via the isomorphism $x\mapsto -x$.)
\end{theorem}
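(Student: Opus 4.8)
The plan is to read the structure of $S$ off its additive semilattice order, and to use congruence-simplicity to force a sharp dichotomy between a degenerate two-element case and a case in which the multiplicative semigroup is cancellative and embeds into the real line. First I would use additive idempotency to introduce the order $x\leq y\iff x+y=y$; then $+$ is the join, and distributivity makes multiplication monotone in each variable (from $x+y=y$ one gets $xz+yz=(x+y)z=yz$, i.e. $xz\leq yz$). Since a congruence-simple semiring has at least two elements, the argument splits according to whether $|S|=2$ or $|S|\geq 3$.

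In the case $|S|=2$ the additive semilattice is the two-element chain, so $+$ is the maximum of that chain. A direct inspection of the commutative multiplications distributing over this addition, constrained by $|SS|\geq 2$ and by the absence of a bi-absorbing element, leaves exactly the meet: the constant multiplications violate $|SS|\geq 2$, and the only remaining option $\cdot=+$ is excluded because its top element would then be bi-absorbing. Hence $S\cong L_2(\vee,\wedge)$, which is case (i).

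For $|S|\geq 3$ I would aim at case (ii) by building an order-embedding of $S$ into $\R(\max,+)$. The expected steps are: (a) show the order $\leq$ is total, the mechanism being that a pair of incomparable elements yields a proper, non-identical congruence, which is impossible under simplicity; (b) show the multiplicative semigroup $(S,\cdot)$ is cancellative, ruling out the identifications forced by $xz=yz$ with $x\neq y$ as again producing a proper non-identical congruence; (c) pass to the totally ordered abelian group of multiplicative quotients (here commutativity is essential), verify the Archimedean property, and invoke H\"older's theorem to obtain an order-isomorphism onto a subgroup of $(\R,+)$. Under this map the semiring addition, being the join in a total order, becomes $\max$, so $S$ is isomorphic to a subsemiring $A$ of $\R(\max,+)$. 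Finally I would extract the boundary conditions: a greatest element of $S$ would correspond to a bi-absorbing $+\infty$, which is forbidden, and if $A$ were contained in $\R^{\geq 0}$ or in $\R^{\leq 0}$ one could collapse the part of $A$ beyond a threshold to obtain a proper congruence; simplicity then forces $A\cap\R^{+}\neq\emptyset$ and $A\cap\R^{-}\neq\emptyset$.

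The main obstacle is step (c): extracting cancellativity and the Archimedean property from congruence-simplicity alone, and then checking that the semilattice addition really matches $\max$ under the H\"older embedding. More generally, the delicate point throughout is the congruence bookkeeping in (a), (b) and in the boundary argument, since each structural conclusion—totality of the order, cancellativity, and two-sidedness of $A$—must be obtained by exhibiting a genuinely proper non-identical congruence whenever the desired property fails, and these candidate congruences must be verified to respect both operations.
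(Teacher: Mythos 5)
First, a point of comparison: the paper itself contains no proof of this statement --- Theorem \ref{comm} is imported verbatim from \cite[Theorem 10.1]{simple_comm} --- so your proposal has to stand entirely on its own. The part that does stand is the case $|S|=2$: the four commutative multiplications distributing over the two-element join are the two constants (excluded by $|SS|\geq 2$), the join itself (whose top element is then bi-absorbing), and the meet, which gives $L_2(\vee,\wedge)$. That enumeration is complete and correct.

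The case $|S|\geq 3$, however, is a plan rather than a proof, and the three pivotal claims (a)--(c) are exactly where the content of the theorem lies; you flag them as obstacles, but none is routine and for (a) no candidate congruence is even named. The one cheap congruence available on an additively idempotent semiring, $x\sim y$ iff $x+c=y+c$ for some $c$, is automatically non-identical whenever $|S|\geq 2$ (take $x=a$, $y=a+b$, $c=b$), so simplicity only forces the order to be upward directed --- totality does not drop out of ``incomparable elements yield a congruence'' without a genuinely new construction. For (b) there is a clean route you should make explicit: for each \emph{fixed} $z$ the relation $x\sim_z y\iff xz=yz$ is a congruence (commutativity gives $xwz=ywz$), so by simplicity each $z$ is either cancellable or satisfies $|Sz|=1$; but the second branch produces a multiplicatively absorbing element, which one then shows is a zero, and this sub-case must be disposed of before forming the group of quotients --- note that $L_2$ itself has a multiplicatively absorbing element, so the split is not cleanly $|S|=2$ versus $|S|\geq 3$. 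For (c), the Archimedean property should come from collapsing modulo a proper convex subgroup of the quotient group, and one must verify that this congruence restricts to a proper non-identical congruence of $S$ itself, not just of the ambient group. Finally, your boundary remark is off: a greatest element of $S(+)$ need not be bi-absorbing (the top of $L_2$ is not), so the hypothesis ``no bi-absorbing element'' does not by itself forbid a maximum; in case (ii) the absence of a maximum is a \emph{consequence} of $A\cap\R^{+}\neq\emptyset$, which must first be obtained from the threshold-collapse congruence you sketch.
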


Now we are ready to state our result on the commutative case.

\begin{theorem}\label{commutative}
 Let $S$ be a commutative additively idempotent semiring and $n\geq 2$. Then the following conditions are equivalent:
  \begin{enumerate}
  \item[(i)] $\mathbf{M}_n(S)$ is congruence-simple.
  \item[(ii)] $S$ is congruence-simple, $|SS|\neq 1$  and $S$ has no bi-absorbing element.
  \item[(iii)] $S$ is isomorphic either to  $L_2(\vee,\wedge)$ or to a subsemiring $A$ of the semiring $\R(\max,+)$ such that $A\cap \R^{+}\neq\emptyset$ and $A\cap \R^{-}\neq\emptyset$.
 \end{enumerate}
\end{theorem}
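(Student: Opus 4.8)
The plan is to establish the cycle of implications (i)$\Rightarrow$(ii)$\Rightarrow$(iii)$\Rightarrow$(i), leveraging the general structural results already proved in the preceding sections together with the classification recalled in Theorem~\ref{comm}.

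First I would prove (i)$\Rightarrow$(ii). This is the cheapest direction: assuming $\mathbf{M}_n(S)$ is congruence-simple for $n\geq 2$, Proposition~\ref{6.0.1} immediately gives that $S$ is congruence-simple, that $S$ has no bi-absorbing element, and that $|SS|\geq 2$. Since $|SS|\geq 2$ is exactly $|SS|\neq 1$, all three conditions in (ii) follow at once. No extra work is needed here.

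Next, (ii)$\Rightarrow$(iii). Here I would invoke Theorem~\ref{comm} directly. The hypotheses of that theorem are that $S$ is commutative, congruence-simple, additively idempotent, $|SS|\geq 2$, and has no bi-absorbing element --- precisely the standing assumption on $S$ together with the three conditions in (ii). The conclusion of Theorem~\ref{comm} is verbatim the dichotomy stated in (iii), so this implication is essentially a citation. The only thing to check is that ``commutative additively idempotent'' (our blanket hypothesis) plus (ii) matches the full hypothesis list of Theorem~\ref{comm}, which it does.

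The genuinely substantive direction, and the main obstacle, is (iii)$\Rightarrow$(i): given that $S$ is one of the two listed semirings, I must show $\mathbf{M}_n(S)$ is congruence-simple. My plan is to apply Theorem~\ref{add_idem}, whose hypothesis is that the natural order on $S$ is downwards directed, i.e.\ for all $a,b\in S$ there is $c\in S$ with $c\leq a$ and $c\leq b$. Both candidate semirings satisfy this: in the lattice $L_2(\vee,\wedge)$ the meet $a\wedge b$ serves as $c$; in a subsemiring $A\subseteq\R(\max,+)$ the order is the usual order of reals and is totally ordered, hence trivially downwards directed (take $c=\min\{a,b\}=a$ or $b$). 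It then remains to verify condition (ii) of Theorem~\ref{add_idem}, namely that $S$ is congruence-simple and that for all $a\neq b$ and all $e$ there exist $c,d,f,g$ with $ca+fe\neq cb+fe$ and $ad+eg\neq bd+eg$. Congruence-simplicity of $S$ holds in both cases by Theorem~\ref{comm} (these are exactly the semirings classified \emph{as} congruence-simple). For the separation condition, in the lattice case a short finite check suffices, while in the $\R(\max,+)$ case one exploits that addition is a group operation (so $c\cdot a = c+a$ is cancellative) --- choosing the additive terms $fe,eg$ small enough (using that $A$ meets $\R^{-}$, so arbitrarily negative elements are available, or simply using cancellativity of $+$) keeps $ca+fe$ and $cb+fe$ distinct whenever $a\neq b$. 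The care required is that $A$ need not contain a zero or unity, so one cannot simply take $c=1,f=0$ as in Theorem~\ref{subdirectly_irreducible_4}; instead I would use the downward-unboundedness ($A\cap\R^{-}\neq\emptyset$) to supply additive terms that do not swamp the cancellative difference $ca-cb=a-b\neq 0$. Once condition (ii) of Theorem~\ref{add_idem} is verified, that theorem yields (i), closing the cycle.
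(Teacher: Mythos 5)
Your proposal is correct and follows essentially the same route as the paper: (i)$\Rightarrow$(ii) by Proposition~\ref{6.0.1}, (ii)$\Rightarrow$(iii) by Theorem~\ref{comm}, and (iii)$\Rightarrow$(i) by verifying condition (ii) of Theorem~\ref{add_idem} for a subsemiring of $\R(\max,+)$, using a sufficiently large power of a negative element of $A$ to make $f+e$ small enough that it does not swamp the (cancellative) difference between $c+a$ and $c+b$. The only cosmetic difference is that the paper dispatches the $L_2(\vee,\wedge)$ case by citing Theorem~\ref{6.4} (since $L_2$ has a multiplicatively absorbing element) instead of running the finite check through Theorem~\ref{add_idem}; both work.
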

\begin{proof}
 (i)$\Rightarrow$(ii): Follows from Proposition \ref{6.0.1}.
 
 (ii)$\Rightarrow$(iii): Follows from Theorem \ref{comm}.

 (iii)$\Rightarrow$(i): If $S$ is isomorphic either to  $L_2(\vee,\wedge)$, the assertion follows from  Theorem \ref{6.4}. 
 Let $S$ be a subsemiring  of $\R(\max,+)$ such that $S\cap \R^{+}\neq\emptyset$ and $S\cap \R^{-}\neq\emptyset$.
 
 We are going to verify the condition (ii) in Theorem \ref{add_idem} for $S$. This means that for all $a,b,e\in S$, $a\neq b$, we need to find $c,f\in S$ such that $\max\{c+a,f+e\}\neq\max\{c+b,f+e\}$. Set $c=a$. Since $S\cap \R^{+}\neq\emptyset$, there is $f'\in S$ such that $f'<0$. Hence there is a positive integer $k$ such that $f=kf'\leq \min\{c+a-e,c+b-e\}$. Therefore $f+e\leq \min\{c+a,c+b\}$ and we  obtain that $\max\{c+a,f+e\}=c+a\neq c+b=\max\{c+b,f+e\}$. Hence $\mathbf{M}_n(S)$ is congruence-simple by Theorem \ref{add_idem}.
\end{proof}

Now we characterize the finite case. For this purpose let us recall a construction and a result from \cite{zumbragel}.

For a finite lattice $L$, $|L|\geq 2$, we denote by  $\End_0(L)(\vee,\circ)$  the semiring of all  join-morph\-isms of $L$ preserving the least element of $L$. For $f,g\in \End_0(L)$ we naturally have $(f\vee g)(x)=f(x)\vee g(x)$ and $(f\circ g)(x)=f(g(x))$ for every $x\in L$. Set $X(L)=\set{f\in \End_0(L)}{|f(L)|\leq 2}$.

\begin{theorem}\label{classification}
\cite[Theorem 5.1]{zumbragel} Let $S$ be a finite additively idempotent semiring, $|S|\geq 3$. Let the greatest element $\omega\in S$ of $S(+)$ be neither left nor right multiplicatively absorbing. Then there are a finite lattice $L$ and a subsemiring $R$ of $\End_0(L)(\vee,\circ)$ with $X(L)\sub R$ such that $S$ is isomorphic to $R$. Conversely, every such a subsemiring $R$ is congruence-simple.
\end{theorem}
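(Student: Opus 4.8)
The statement is a representation (Cayley-type) theorem, and the natural engine for it is congruence-simplicity of $S$: as in the commutative Theorem \ref{comm}, and as the final ``conversely'' clause demands, I read the forward implication under the hypothesis that $S$ is congruence-simple. The plan is to realise $S$ faithfully inside an endomorphism semiring $\End_0(L)$ of a suitable finite lattice and then to upgrade faithfulness into the sharp inclusion $X(L)\sub R$.

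First I would fix the lattice. Additive idempotency makes $(S,+)$ a finite join-semilattice under $a\le b\iff a+b=b$, with top $\omega=\sum_{s\in S}s$; adjoining a new least element $\bot$ (when $S$ has none) yields a finite poset $L$ possessing a top and a bottom in which every join exists, and a standard argument then produces all meets, so $L$ is a finite lattice. Next I would use the left regular representation $\lambda\colon S\to\End_0(L)$, $\lambda_s(x)=sx$ and $\lambda_s(\bot)=\bot$. Left distributivity makes each $\lambda_s$ a join-morphism fixing $\bot$, while associativity together with both distributive laws makes $\lambda$ a semiring homomorphism into $\End_0(L)(\vee,\circ)$. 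For injectivity I would first note $|SS|\ge 2$: if $|SS|=1$ the multiplication is constant, so every semilattice congruence of $(S,+)$ is a semiring congruence; as the only congruence-simple semilattice has two elements, this contradicts congruence-simplicity because $|S|\ge 3$. With $|SS|\ge 2$ and $|S|\ge 3$, case (i) of Lemma \ref{6.0} is excluded, so case (ii) gives, for every $a\ne b$, some $d$ with $ad\ne bd$; hence $\lambda_a\ne\lambda_b$ and $R:=\lambda(S)$ is a subsemiring of $\End_0(L)$ with $S\cong R$.

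The main obstacle is the inclusion $X(L)\sub R$. The elements of $X(L)$ are the maps $f_{I,v}$ sending an ideal $I\subsetneq L$ to $\bot$ and its complement to a single value $v$; these ``rank-$\le 2$'' morphisms are the combinatorial atoms of $\End_0(L)$. I would first manufacture a few of them inside $R$ from the separating elements supplied by Lemma \ref{6.0}(ii), forming sums and composites of the $\lambda_s$ to collapse a prescribed ideal and pin a prescribed value, and then argue that congruence-simplicity of $R$ forces all of $X(L)$ in: if some $f_{I,v}$ were missing, the relation identifying two morphisms exactly when they agree off $I$ (suitably combined with the value structure) would be a proper congruence on $R$, contradicting simplicity. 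Converting the abstract simplicity hypothesis into this concrete combinatorial abundance is precisely the delicate heart of the argument, and I expect it to absorb the bulk of the work.

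For the converse I would take an arbitrary congruence $\theta\ne\mathrm{id}$ on a subsemiring $R$ with $X(L)\sub R$ and pick $(f,g)\in\theta$ with $f(a)\ne g(a)$ for some $a\in L$. Because $X(L)\sub R$ contains a morphism realising every ideal/value pair, I can pre- and post-compose $f$ and $g$ with suitable $f_{I,v}$'s to transport the single disagreement $f(a)\ne g(a)$ onto any prescribed pair of atoms and, using joins, to amplify it; this propagates the relation from one pair to all of $R\times R$, whence $\theta=R\times R$ and $R$ is congruence-simple. The routine checks — that the auxiliary relations are congruences and that all composites remain in $\End_0(L)$ — I would leave to direct verification.
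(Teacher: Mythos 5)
First, a point of orientation: the paper does not prove this statement at all --- it is quoted as \cite[Theorem 5.1]{zumbragel} and used as a black box --- so there is no internal proof to compare against. You did correctly sense that the forward implication only makes sense under the (implicit) hypothesis that $S$ is congruence-simple, and your preliminary steps (that $|SS|\geq 2$, that Lemma \ref{6.0}(ii) gives injectivity of a regular representation, and the outline of the converse via transporting a single disagreement through pre- and post-composition with elements of $X(L)$ and amplifying by joins) are all sound and standard.

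However, your construction contains a concrete error that goes beyond the gap you acknowledge. You take $L=S\cup\{\bot\}$ and $R=\lambda(S)$, the image of left multiplication. In a finite lattice every nonempty join-closed down-set is a principal ideal, so the nonzero elements of $X(L)$ are precisely the maps $f_{\downarrow a,v}$ with $a\in L\setminus\{\omega\}$ and $v\in L\setminus\{\bot\}$, and distinct pairs $(a,v)$ give distinct maps (the ideal $f^{-1}(\bot)$ recovers $a$ and the image recovers $v$). Hence $|X(L)|\geq |S|^2$, while $|R|=|\lambda(S)|\leq |S|$, so the required inclusion $X(L)\sub R$ is impossible for any $|S|\geq 2$ with this choice of $L$. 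The lattice in the theorem must be much smaller than $S$; in \cite{zumbragel} it is extracted from a minimal faithful $S$-semimodule rather than from $S(+)$ itself, and finding it is a large part of the theorem's content. Separately, the mechanism you propose for forcing $X(L)\sub R$ cannot work even in principle: congruence-simplicity of $R$ is a statement about equivalence relations on $R$ and can never compel additional elements of $\End_0(L)$ to belong to the subsemiring $R$; the ``agree off $I$'' relation you describe is at best a congruence on $R$, and its properness has no bearing on whether $f_{I,v}\in R$. The elements of $X(L)$ must be manufactured explicitly as sums and composites of the representing maps, which is exactly the density argument you defer.
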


Let us state a lemma on the two-element case yet.

\begin{lemma}\label{6.5}
 Let $S$ be an additively idempotent two-element semiring. If $\mathbf{M}_n(S)$ is congruence-simple for some $n\geq 2$ then $S(+,\cdot)\cong L_2(\vee,\wedge)$  where $L_2=\{0,1\}$ is a lattice with $0<1$.
\end{lemma}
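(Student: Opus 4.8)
The plan is to enumerate the possible two-element additively idempotent semirings and eliminate all but the lattice $L_2(\vee,\wedge)$ using the constraints imposed by congruence-simplicity of $\mathbf{M}_n(S)$. Since $S$ has two elements, write $S=\{a,b\}$. The additive idempotency fixes $a+a=a$ and $b+b=b$, so the addition is determined entirely by the single value $a+b=b+a$, which must equal either $a$ or $b$; either way the additive reduct is a two-element semilattice, hence a chain, and we may relabel $S=\{0,1\}$ so that $0+1=1$, i.e. $0<1$ in the induced order. Thus $S(+)\cong L_2(\vee)$ and it remains to pin down the multiplication.

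First I would invoke Proposition \ref{6.0.1}(iii), which gives $|SS|\geq 2$; since $|S|=2$ this forces $SS=S=\{0,1\}$, so the multiplication is surjective. Next I would use Corollary \ref{corollary_omega}: the greatest element $\omega=1$ of $S(+)$ is neither left nor right multiplicatively absorbing, so $1\cdot x\neq 1$ for some $x$ and $x\cdot 1\neq 1$ for some $x$. Combined with $|SS|=2$, these two facts are strong: there are only finitely many binary multiplication tables on $\{0,1\}$ compatible with associativity and two-sided distributivity over the join, and the plan is to list them and discard each one that either collapses $|SS|$ to $1$, makes $1$ multiplicatively absorbing, or fails the separation condition of Proposition \ref{6.1} (equivalently of Proposition \ref{7.1}). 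The surviving table should be exactly the meet $x\cdot y=x\wedge y$, giving $S\cong L_2(\vee,\wedge)$.

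In carrying out the case analysis I would organize it around the value $1\cdot 1$. Distributivity over $\vee$ gives $1\cdot 1 = 1\cdot(0\vee 1)$ and similar identities, and since the multiplication distributes on both sides, the whole table is generated by the products $0\cdot 0$, $0\cdot 1$, $1\cdot 0$, $1\cdot 1$ subject to $0\cdot 0 = 0\cdot(0\vee0)=0\cdot0$ (vacuous) and the distributive relations $x\cdot 1 = x\cdot 0 \vee x\cdot 1$, i.e. $x\cdot 0\leq x\cdot 1$, together with the symmetric $0\cdot x\leq 1\cdot x$. These monotonicity constraints, the surjectivity $|SS|=2$, and the non-absorbing condition on $1$ should leave only the meet operation. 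I would check directly that the meet satisfies Proposition \ref{6.1}: for $a\neq b$ in $\{0,1\}$ one exhibits multipliers $c,d$ separating them, which is routine since $1$ is a two-sided multiplicative identity for the meet.

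The main obstacle I anticipate is not any single hard step but ensuring the case analysis is genuinely exhaustive: one must be careful that every candidate multiplication is tested against all three constraints (surjectivity from Proposition \ref{6.0.1}(iii), non-absorbance of $\omega$ from Corollary \ref{corollary_omega}, and the separation property from Proposition \ref{6.1}), and that associativity is verified for the surviving candidate so that it really is a semiring. A clean way to avoid a tedious enumeration is to argue structurally: the non-absorbing condition forces $1\cdot 0=0\cdot 1=0$, while surjectivity forces $1\cdot 1=1$ (otherwise the image would be $\{0\}$), and then distributivity pins $0\cdot 0=0$; this is precisely the meet, so $S(+,\cdot)\cong L_2(\vee,\wedge)$, completing the proof.
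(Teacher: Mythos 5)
Your proof is correct, and the closing ``structural'' paragraph is in fact a complete argument on its own: monotonicity of multiplication (from distributivity over $+$) together with Corollary~\ref{corollary_omega} forces $1\cdot 0=0\cdot 1=0$, then $|SS|\geq 2$ from Proposition~\ref{6.0.1}(iii) forces $1\cdot 1=1$, and monotonicity gives $0\cdot 0=0$, so the multiplication is the meet. The route differs from the paper's in which consequences of congruence-simplicity it leans on. The paper writes $S=\{a,b\}$ with $a<b$, pins down $a^2=a$ and $b^2=b$ using the absence of a bi-absorbing element (Proposition~\ref{6.0.1}(ii)) and $|SS|\geq 2$, and then eliminates the remaining cases $ab=b=ba$ (bi-absorbing again) and the two ``twisted'' cases $ab=b,\ ba=a$ and $ab=a,\ ba=b$ via the separation property of Proposition~\ref{6.1}. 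You instead dispatch the off-diagonal products in one stroke with Corollary~\ref{corollary_omega}, never invoking Proposition~\ref{6.1} or the bi-absorbing condition; since Corollary~\ref{corollary_omega} is available (the two-element additive semilattice certainly has a top), this is legitimate and arguably shorter. Two small remarks: the lengthy case-enumeration plan in your middle paragraphs is rendered superfluous by your final argument and could be deleted; and there is no need to ``verify associativity for the surviving candidate,'' since $S$ is given as a semiring and you are only identifying its multiplication, not constructing one.
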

\begin{proof}
The additive structure on $S$ is a semilattice. Hence $S=\{a,b\}$ and $a<b$. It follows that $a^2=aa\leq \inf\{ab,ba,bb\}$. By Proposition \ref{6.0.1}, $S$ has no bi-absorbing element. Assuming now that $a^2=b$ we obtain that the element $a$ is bi-absorbing, a contradiction. Therefore $a^2=a$. Similarly, $\sup\{ab,ba,aa\}\leq bb$ implies that $b^2=b$.  

Further, the possibility $ab=b=ba$ means, that the element $b$ is bi-absorbing, a contradiction. Further, the case when $ab=b$ and $ba=a$ implies that $ax=bx$ for every $x\in S$, a contradiction with Proposition \ref{6.1}. Similarly, the assumption that
$ab=a$ and $ba=b$ implies that $xa=xb$ for every $x\in S$, again a contradiction with Proposition \ref{6.1}.

Finally, the remaining case $ab=a=ba$ means that $S$ is a two-element lattice.
\end{proof}

Our result on finite (additively idempotent) congruence-simple matrix semirings is as follows.

\begin{theorem}\label{finite}
 Let $S$ be a finite additively idempotent semiring and $n\geq 2$. Then $\mathbf{M}_n(S)$ is congruence-simple if and only if $S\cong R$, where $R$ is a subsemiring  of $(\End_0(L),\vee,\circ)$ for a finite lattice $L$ and $X(L)\sub R$.
\end{theorem}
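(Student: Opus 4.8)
The plan is to obtain Theorem \ref{finite} as an assembly of two imported results: the classification of finite additively idempotent congruence-simple semirings (Theorem \ref{classification}) and the reduction via a multiplicatively absorbing element (Theorem \ref{6.4}), with the two-element case peeled off by Lemma \ref{6.5}. The unifying observation is that a finite additively idempotent semiring is, under $+$, a finite join-semilattice and hence has a greatest element $\omega$, while on the $\End_0(L)$ side the constant map to the least element $0_L$ of $L$ provides a multiplicatively absorbing (in fact zero) element. Each direction then reduces to checking the hypotheses of one cited theorem.

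For the forward implication I would first apply Proposition \ref{6.0.1} to record that $S$ is congruence-simple, contains no bi-absorbing element, and satisfies $|SS|\geq 2$; note also $|S|\geq 2$, since $\mathbf{M}_n(S)$ has at least two congruences. If $|S|=2$, Lemma \ref{6.5} already gives $S\cong L_2(\vee,\wedge)$, and since $\End_0(L_2)=\{z,\mathrm{id}\}\cong L_2(\vee,\wedge)$ with $X(L_2)=\End_0(L_2)$, the choice $L=L_2$ and $R=\End_0(L_2)$ witnesses the claim. If $|S|\geq 3$, I would use that $S(+)$ is a finite semilattice with greatest element $\omega$ and invoke Corollary \ref{corollary_omega} to conclude that $\omega$ is neither left nor right multiplicatively absorbing; this is exactly the hypothesis required by Theorem \ref{classification}, which then supplies a finite lattice $L$ and a subsemiring $R$ of $\End_0(L)$ with $X(L)\sub R$ and $S\cong R$.

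For the converse I would start from $S\cong R$ with $X(L)\sub R\sub \End_0(L)$. The constant map $z$ onto $0_L$ lies in $X(L)\sub R$, is the additive identity, and satisfies $f\circ z=z=z\circ f$ for every $f\in R$ because each $f\in\End_0(L)$ preserves $0_L$; hence $z$ is multiplicatively absorbing, placing us in the situation of Theorem \ref{6.4}. It then suffices to verify the three conditions of Theorem \ref{6.4}(iii). Congruence-simplicity of $R$ is the converse half of Theorem \ref{classification}. For $|SS|\geq 2$ I would exhibit two distinct idempotent products: the zero map $z=z\circ z$ and the map $\omega\in X(L)$ sending every $x\neq 0_L$ to the greatest element $1_L$, for which $\omega\circ\omega=\omega$ and $\omega\neq z$ since $|L|\geq 2$. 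For the absence of a bi-absorbing element, I would argue by contradiction: if $o\in R$ were bi-absorbing, then $o\circ z=o$ by multiplicative absorption, whereas $(o\circ z)(x)=o(0_L)=0_L$ forces $o=z$; but then additive absorption $z+f=z$ combined with $z+f=f$ collapses $R$ to $\{z\}$, contradicting $\omega\neq z$. With all three conditions established, Theorem \ref{6.4} yields that $\mathbf{M}_n(S)$ is congruence-simple.

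The substantive content is entirely carried by Theorems \ref{classification} and \ref{6.4}, so the real labour is only in confirming their hypotheses. The step I expect to demand the most care is ensuring the greatest element $\omega$ is not multiplicatively absorbing (the exact trigger for Theorem \ref{classification}) and correctly isolating the two-element case, where that classification does not apply but Lemma \ref{6.5} does; the checks that a subsemiring $R\supseteq X(L)$ has $|SS|\geq 2$ and no bi-absorbing element are routine once the zero map is singled out as the multiplicatively absorbing element.
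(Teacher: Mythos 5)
Your proposal is correct and follows essentially the same route as the paper: the two-element case via Lemma \ref{6.5}, the case $|S|\geq 3$ via Corollary \ref{corollary_omega} and Theorem \ref{classification}, and the converse by verifying the hypotheses of Theorem \ref{6.4}. You merely spell out some routine verifications (the zero map as the absorbing element, $|SS|\geq 2$, absence of a bi-absorbing element) that the paper states more tersely.
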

\begin{proof}
 Let $\mathbf{M}_n(S)$ be congruence-simple. If $|S|=2$, the assertion follows from Lemma \ref{6.5} and the fact that $L_2\cong \End_0(L_2)$ for the lattice $L_2=\{0,1\}$.
 Assume that $|S|\geq 3$.  By Corollary  \ref{corollary_omega}, the greatest element $\omega$ of $S(+)$ has to be neither left nor right multiplicatively absorbing. Hence, by Theorem \ref{classification}, $S\cong R$, where $R$ is a subsemiring  of $(\End_0(L),\vee,\circ)$ for some finite lattice $L$ and $X(L)\sub R$.

 Conversely, let $S\cong R$, where $R$ is a subsemiring  of $(\End_0(L),\vee,\circ)$ for some finite lattice $L$ and $X(L)\sub R$.
If $|S|=2$ then $S$ is obviously congruence-simple and if $|S|\geq 3$ then $S$ is congruence-simple by Theorem \ref{classification}. Further, $S$ contains a zero element and $|SS|\geq 2$. Hence, by Theorem \ref{6.4}, the semiring $\mathbf{M}_n(S)$ is  congruence-simple for any $n\geq 2$.
\end{proof}

\section{Almost integral semirings}\label{section_integral}

An important subclass of additively idempotent semirings are semirings with a unity that is also the greatest element. In \cite{golan3,golan,golan2} such semirings are called \emph{simple}, in \cite{cao} they are called \emph{incline algebras} and in \cite{residuated,jipsen} they are called \emph{integral}.
Examples of such  semirings are, e.g., the semiring of all ideals of a ring, the powers-set lattice or  MV-semirings (that correspond to MV-algebras, see \cite{dinola}).

In accordance with this terminology (see also \cite{behrens} for the semigroup case) we call a semiring $S(+,\cdot)$  \emph{almost integral}  iff $S$ is additively idempotent and  for all $a,b\in S$ it holds that $ab\leq a$ and $ba\leq a$ (or, equivalently, $a+a=a$ and $ab+ba+a=a$). 

If, moreover, the semiring $S$ possesses the unity we call it \emph{integral} (in accordance with the terminology in \cite{jipsen}).

We fully characterize the subdirectly irreducible (congruence-simple, resp.)  matrix semirings $\mathbf{M}_n(S)$, $n\geq 1$, over the almost integral semirings $S$ (Theorems \ref{integral_main}, \ref{lattice} and \ref{subdirectly_irreducible_3}). General examples of these cases will be provided in Remarks \ref{construction}, \ref{last_remark} and Examples \ref{example_const} and \ref{non-commutative}.

First, let us start with some basic properties of almost integral semirings.

\begin{remark}\label{remark_sub}
 Let $S$ be an additively idempotent semiring.

 (i) Assume that $S$ is almost integral. If we add a new element $u$ to $S$ and extend the semiring operation with $a\cdot u=u\cdot a=a$ and $u+a=a+u=u$ for all $a\in S\cup\{u\}$, the set $\overline{S}=S\cup\{u\}$ becomes an integral semiring with a unity $1_{\overline{S}}=u$ that is the greatest element in $\overline{S}$.
 
 (ii) Let $u\in S$ be such that $u^2=u$. Then the set $S_u=\set{uau}{a\in S,\ uau\leq u}$ is a subsemiring of $S$ with a unity $1_{S_u}=u$ that is the greatest element in $S_u$. Hence $S_u$ is integral. 
\end{remark}

\begin{proposition}\label{sub-ordered_basics}
Let $S$ be an almost integral semiring.
\begin{enumerate}
\item[(i)] If $w\in S$ is a minimal element of $S$ then $w=0_S$ is a zero.
\item[(ii)] If $S$ has a unity $1_S\in S$ then $1_S$ is the greatest element of $S$.
\end{enumerate}
\end{proposition}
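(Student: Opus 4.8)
The plan is to work throughout with the natural order of the additive semilattice $S(+)$, namely $x\le y$ iff $x+y=y$, under which the two defining inequalities of an almost integral semiring read $ab+a=a$ and $ba+a=a$. Both parts will then reduce to choosing the right specialization of these inequalities.

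For (i), let $w\in S$ be minimal. First I would specialize the inequalities with left factor $w$: for every $y\in S$ we get $wy\le w$ and $yw\le w$. Since $w$ is minimal, each of these forces equality, so $wy=w=yw$ for all $y$; that is, $w$ is multiplicatively absorbing, $Sw=\{w\}=wS$. The remaining point is to upgrade \emph{minimal} to \emph{least}, and the trick is to apply an almost-integral inequality in the other direction: for an arbitrary $a\in S$ we have $aw\le a$, while multiplicative absorption gives $aw=w$, so $w\le a$. Hence $w$ is the least element of $S$, which means $a+w=a$ for every $a\in S$; combined with the multiplicative absorption already established, this is precisely the assertion that $w=0_S$ is a zero.

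For (ii), suppose $S$ has a unity $1_S$. Applying the inequality $ab\le a$ with $a=1_S$ and $b$ arbitrary yields $1_S b\le 1_S$; since $1_S b=b$, this reads $b\le 1_S$ for every $b\in S$, i.e.\ $1_S$ is the greatest element of $S$.

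I do not expect a genuine obstacle here, as each statement follows from a single well-chosen instance of the almost-integral axioms. The only step needing slight care is in (i): establishing that $w$ is multiplicatively absorbing does \emph{not} by itself show that $w$ is least, and one must feed the absorption identity $aw=w$ back into the inequality $aw\le a$ in order to conclude $w\le a$ for all $a$.
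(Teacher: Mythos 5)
Your proof is correct and follows essentially the same route as the paper's: for (i) minimality forces $wy=w=yw$, and then feeding the absorption identity back into $aw\le a$ (the paper uses the mirror instance $wa\le a$ via $a\le w+a=wa+a\le a$) yields that $w$ is least, hence a zero; for (ii) both arguments are the single instance $a=a\cdot 1_S\le 1_S$.
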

\begin{proof}
 (i) By the minimality of $w$, the inequality $aw\leq w$ implies $aw=w$  (and, similarly, $wa=w$) for every $a\in S$. Hence $w$ is multiplicatively absorbing.  Now, we have $a\leq w+a=wa+a\leq a+a=a$ and therefore $w+a=a$ for every $a\in S$. Thus, $w$ is a zero element.

 (ii) Clearly, $a= a\cdot 1_S\leq 1_S$ for all $a\in S$.
\end{proof}

 To obtain a characterization of simple (subdirectly irreducible, resp.) matrix semirings over almost integral semirings, we need, in view of Proposition \ref{6.0.1} and Proposition \ref{subdirectly_irreducible}, first investigate the conditions when an almost integral semiring is simple (subdirectly irreducible, resp.). We will do this in Proposition \ref{integral_main}.

The following remark is easy to verify. In accordance with the notation in  semigroup theory we  denote by $S^1$ the smallest multiplicative monoid containing the semigroup $S(\cdot)$.

\begin{remark}\label{transitive}
 Let $S$ be a semiring with a zero element $0$. Let $a,b\in S$. Set $A=\set{(cad+h,cbd+h)}{c,d\in S^1,\ h\in S}$ and $B=id_S\cup A\cup A^T$, where $A^T$ is the transposed relation. Then $(a,b)\in A$ and the transitive closure of $B$ is the least congruence $\tau$ on $S$ containing $(a,b)$.

 Also, if $(0,e)\in \tau$ for some $e\in S$, $e\neq 0$, then there is $g\in S$, $g\neq 0$ such that $(0,g)\in  A\cup A^T$. Hence there are $c,d\in S^1$ and $h\in S$ such that either $cad+h=g\neq 0=cbd+h$ or $cad+h=0\neq g= cbd+h$.
\end{remark}

\begin{theorem}\label{integral_main}
 Let $S$ be an almost integral semiring. Then the following are equivalent:
 \begin{enumerate}
 \item[(i)] $S$ is subdirectly irreducible.

 \item[(ii)] $S$ has the zero element $0\in S$ and a least element  $e\in S$ such that $e\neq 0$. Also, for all $a,b\in S$, $a\not\leq b$ there are $c,d\in S^1$ such that $cad\neq 0= cbd$.
 \end{enumerate}
 If any of the above conditions is fulfilled, then  the least non-identical congruence $\sigma$ of $S$ is of the form $\sigma=\{(0,e),(e,0)\}\cup id_S$. 
\end{theorem}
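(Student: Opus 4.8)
The plan is to prove both implications directly, the non-trivial work being to extract the zero and the least non-zero element from subdirect irreducibility by means of a well-chosen family of congruences, and then to read off the separating condition from Remark~\ref{transitive}. I would first dispatch (ii)$\Rightarrow$(i). Put $\sigma=\{(0,e),(e,0)\}\cup id_S$. Since $e$ is the least non-zero element and products drop below both factors (so $uv\le u$ and $uv\le v$ for all $u,v$), $\sigma$ is a congruence: additively because $e+x=x$ for every $x\ne0$, and multiplicatively because $ex,xe\le e$ forces $ex,xe\in\{0,e\}$. To see it is the least non-identical congruence, take any non-identical congruence $\tau$ and a pair $(a,b)\in\tau$ with $a\ne b$, say $a\not\le b$; the separating condition in (ii) supplies $c,d\in S^1$ with $cad\ne0=cbd$, whence $(cad,0)\in\tau$, and adding $e\le cad$ yields $(e,cad)\in\tau$, so $(0,e)\in\tau$ by transitivity. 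Thus $\sigma\sub\tau$, proving both that $S$ is subdirectly irreducible and that $\sigma$ is its monolith.

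For (i)$\Rightarrow$(ii) the crucial device is the family $\lambda_a=\set{(x,y)}{x+a=y+a}$, $a\in S$. Each $\lambda_a$ is a congruence: additive compatibility is immediate, while multiplicative compatibility is exactly where almost integrality is used, since from $x+a=y+a$ one multiplies by $w$ and re-adds $a$, absorbing $aw$ by $aw+a=a$ on the right and $wa$ by $wa+a=a$ on the left. As $(x,x+a)\in\lambda_a$ always, $\lambda_a=id_S$ holds precisely when $a$ is the least element of $S$, and $\bigcap_{a\in S}\lambda_a=id_S$ because setting $a=x$ and then $a=y$ forces $x=y$. Subdirect irreducibility therefore forbids every $\lambda_a$ from being non-identical (otherwise the monolith would be contained in their intersection $id_S$), so some $\lambda_a=id_S$; that is, $S$ has a least element, which by Proposition~\ref{sub-ordered_basics}(i) is the zero $0$.

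Running the same argument with the smaller family $\set{\lambda_a}{a\ne0}$ produces the least non-zero element. For $a\ne0$ we have $\lambda_a\ne id_S$, so the monolith $\sigma_0$ lies in $\bigcap_{a\ne0}\lambda_a$, and a short computation identifies this intersection as $id_S\cup\{(0,e),(e,0)\}$, where $e$ is the unique element $\ne0$ below every non-zero element. Since $\sigma_0\ne id_S$, such an $e$ exists and $\sigma_0=\{(0,e),(e,0)\}\cup id_S$, which is the asserted form of the monolith. For the separating condition, let $a\not\le b$, so $b\ne a+b$ and the congruence $\tau$ generated by $(b,a+b)$ is non-identical, hence contains $(0,e)$. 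The second part of Remark~\ref{transitive} then gives $c,d\in S^1$ and $h\in S$ with $cbd+h$ and $c(a+b)d+h=cad+cbd+h$ equal to $0$ and a non-zero value in some order; as $0$ is the bottom element, the zero can only sit on the smaller side $cbd+h$, forcing $cbd=0=h$ and $cad\ne0$, which is exactly the required separation.

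The main obstacle is precisely this passage from an abstract irreducibility hypothesis to concrete structure: an infinite almost integral semiring need not, a priori, possess a minimal element or a least non-zero element, and the argument succeeds only because almost integrality makes the relations $\lambda_a$ genuine congruences whose total intersection is $id_S$. This lets subdirect irreducibility be exploited twice, on $\set{\lambda_a}{a\in S}$ to force the zero and on $\set{\lambda_a}{a\ne0}$ to force the least non-zero element. Checking that the $\lambda_a$ are congruences and not merely equivalence relations is the technical heart, and it rests entirely on the inequalities $ab\le a$ and $ba\le a$.
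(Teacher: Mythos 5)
Your proof is correct and follows essentially the same route as the paper: the congruences $\lambda_a=\set{(x,y)}{x+a=y+a}$ are exactly the paper's $\varrho_x$, the zero and the least non-zero element are extracted by intersecting these congruences and invoking subdirect irreducibility together with Proposition \ref{sub-ordered_basics}(i), and the separating condition and the converse implication are obtained from Remark \ref{transitive} just as in the paper. The only (harmless) difference is organizational: you intersect the whole family $\set{\lambda_a}{a\in S}$ first to produce the zero and then the subfamily $\set{\lambda_a}{a\neq 0}$ to produce $e$, whereas the paper takes a single intersection over the non-minimal elements and analyzes the resulting pair $(e,f)$.
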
 
\begin{proof}
(i)$\Rightarrow$(ii) Assume that the semiring $S$ is subdirectly irreducible.
Denote by $S^\ast$ the set of elements of $S$ that are not minimal. Clearly $S^\ast\neq\emptyset$, as  $|S|\geq 2$.

First, for $x\in S$  the relation  $\varrho_x$ on $S$ defined as $(a,b)\in\varrho_x$ if and only if $a+x=b+x$ is a semiring congruence on $S$. Indeed, for  $r\in S$ and $(a,b)\in\varrho_x$ we have $ra+rx=rb+rx$ and, as $rx\leq x$, we obtain that $ra+x=ra+rx+x=rb+rx+x=rb+x$ and, consequently, $(ra,rb)\in\varrho_x$. In a symmetric way we infer that $(ar,br)\in\varrho_x$.
If $x\in S^\ast$ then there is $x'\in S$ such that $x'<x$. Hence $(x',x)\in\varrho_x$ and $\varrho_x$ is a non-identical congruence.

  Since $S$ is subdirectly irreducible, the relation $\varrho=\bigcap\set{\varrho_x}{x\in S^\ast}$ is a non-identical congruence. Hence there are $e,f\in S$, $e\neq f$ such that $e+x=f+x$ for every $x\in S^\ast$. Assuming that $e,f\in S^\ast$ we obtain that $e=e+e=f+e$ and, similarly, $f=f+f=e+f$. This means, that $e=e+f=f$ which is a contradiction.

 Therefore, without loss of generality, it holds that $e\in S^\ast$ and $f$ is a minimal element in $S$. Thus, by Proposition \ref{sub-ordered_basics}(i), $f=0_S$ is a zero element of the semiring $S$.  Hence for every $x\in S^\ast$ we have $x=f+x=e+x$, i.e., $e\leq x$. This means that $e$ is the least element of $S^\ast$.

 Now it is easy to verify that $\sigma=\{(0,e),(e,0)\}\cup id_S$ is a congruence of the semiring $S$. As $\sigma$ is a minimal non-identical congruence and $S$ is subdirectly irreducible it follows that $\sigma$ is the least non-identical congruence.

 To prove the second condition of (ii), pick $a,b\in S$ such that $a\not\leq b$. Then $b<a+b$. Let $\tau$ be the least congruence on $S$ containing $(b,a+b)$. Then $(0,e)\in\sigma\sub\tau$. By Remark \ref{transitive}, it follows that there are $c,d\in S^1$ and $h\in S$ such that either $cbd+h\neq 0=c(a+b)d+h$ or $cbd+h=0\neq c(a+b)d+h$. Since $a<a+b$ implies that $0\leq cbd+h\leq c(a+b)d+h$ it follows that the case $cbd+h=0\neq c(a+b)d+h$ takes place.  From $cbd+h=0$ it follows that $cbd=0=h$ and therefore $cad\neq 0=cbd$.

 This concludes the proof of the first implication.

(ii)$\Rightarrow$(i) First, it is easy to verify that $\sigma=\{(0,e),(e,0)\}\cup id_S$ is a non-identical congruence on $S$.

Further, let $\varrho$ be a congruence on $S$ such that $(a,b)\in\varrho$ for some $a,b\in S$, $a\neq b$. Without loss of generality, we may assume that $a\not\leq b$.  By the assumption, there are $c,d\in S^1$ such that $cad\neq 0= cbd$. Hence there is $g\in S$, $g\neq 0$ such that $(0,g)\in\varrho$. Since $e\leq g$ we obtain that $(e,g)=(0+e,g+e)\in\varrho$. By the transitivity of $\varrho$ it follows that $(0,e)\in\varrho$. Therefore $\sigma\sub \varrho$ and the semiring $S$ is subdirectly irreducible.
\end{proof}

\begin{remark}
 Let us write the condition (ii)  in Theorem \ref{integral_main} that uses $S^1$ in an explicit way:  For all $a,b\in S$, $a\not\leq b$ there are $c,d\in S$ such that either $a\neq 0= b$ or $ca\neq 0= cb$ or $ad\neq 0= bd$ or $cad\neq 0= cbd$.
\end{remark}

Now we are ready to state the main results of this section (Theorems \ref{lattice} and \ref{subdirectly_irreducible_3}).

\begin{theorem}\label{lattice}
 Let $S$ be an almost integral semiring $S$. The following are equivalent:
\begin{enumerate}
  \item[(i)] $\mathbf{M}_n(S)$ is congruence-simple for some $n\geq 2$.
  \item[(ii)] $S$ is congruence-simple and $|SS|\geq 2$.
  \item[(iii)] $S(+,\cdot)\cong L_2(\vee,\wedge)$, where $L_2$ is the two-element lattice.
\end{enumerate}
 If $S$ is congruence-simple then $|S|=2$. 
\end{theorem}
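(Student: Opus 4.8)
The plan is to prove Theorem \ref{lattice} by establishing the cycle (i)$\Rightarrow$(ii)$\Rightarrow$(iii)$\Rightarrow$(i), drawing on the structural results already assembled for almost integral semirings. The implication (i)$\Rightarrow$(ii) should be immediate from Proposition \ref{6.0.1}, which gives that congruence-simplicity of $\mathbf{M}_n(S)$ forces $S$ to be congruence-simple and $|SS|\geq 2$. The implication (iii)$\Rightarrow$(i) is also routine: the two-element lattice $L_2(\vee,\wedge)$ is commutative, additively idempotent, congruence-simple, satisfies $|SS|\geq 2$, and has no bi-absorbing element (its bottom element is a zero, not bi-absorbing), so Theorem \ref{commutative} (or directly Theorem \ref{6.4}) yields congruence-simplicity of $\mathbf{M}_n(S)$ for all $n\geq 2$.

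The heart of the argument, and where I expect the real work to lie, is (ii)$\Rightarrow$(iii): showing that a congruence-simple almost integral semiring with $|SS|\geq 2$ must be the two-element lattice. The strategy is first to show that congruence-simplicity collapses $S$ to a very small size, and then to pin down its operations. I would begin by invoking Theorem \ref{integral_main}: a congruence-simple (hence subdirectly irreducible) almost integral $S$ has a zero $0$ and a unique least nonzero element $e$, with least non-identical congruence $\sigma=\{(0,e),(e,0)\}\cup id_S$. Since $S$ is congruence-simple, $\sigma$ must equal the full relation $S\times S$, which forces $|S|=2$, i.e. $S=\{0,e\}$ with $0<e$. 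This already proves the final clause ``if $S$ is congruence-simple then $|S|=2$.''

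It then remains to identify the multiplication on the two-element additively idempotent semiring $S=\{0,e\}$. Here I would argue exactly as in Lemma \ref{6.5}: the additive structure is the semilattice with $0<e$, and almost integrality gives $ee\leq e$ together with the absorbing inequalities, so $e^2\in\{0,e\}$. One rules out $e^2=0$: this would make $e\cdot x = 0$ for the relevant products and, combined with congruence-simplicity and $|SS|\geq 2$, contradict the non-degeneracy conditions (Proposition \ref{6.0.1}(iii) forces $|SS|\geq 2$, which is impossible if every product lands in $\{0\}$). Hence $e^2=e$, and since $0$ is the zero element all remaining products are determined: $0\cdot x = x\cdot 0 = 0$ and $e\cdot e=e$. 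This is precisely the meet operation of $L_2$, with $\vee$ the semilattice addition, so $S\cong L_2(\vee,\wedge)$.

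The main obstacle is the careful bookkeeping in (ii)$\Rightarrow$(iii): one must check that the hypotheses of Theorem \ref{integral_main} are genuinely available (congruence-simplicity implies subdirect irreducibility, and an almost integral congruence-simple semiring has at least two elements so $S^\ast\neq\emptyset$), and then verify that the degenerate multiplication $e^2=0$ is incompatible with $|SS|\geq 2$. The latter is the delicate point, since one needs to confirm that collapsing $e^2$ to $0$ really does force $|SS|=1$ in the two-element setting; with $S=\{0,e\}$ and $0$ a zero, the only products are $0$, $0$, $0$, and $e^2$, so $|SS|\geq 2$ is equivalent to $e^2=e$, making the exclusion of $e^2=0$ clean once the size has been reduced to two.
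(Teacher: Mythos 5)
Your proposal is correct and follows essentially the same route as the paper: (i)$\Rightarrow$(ii) via Proposition \ref{6.0.1}, (iii)$\Rightarrow$(i) via Theorem \ref{6.4}, and (ii)$\Rightarrow$(iii) by using Theorem \ref{integral_main} to get the least non-identical congruence $\sigma=\{(0,e),(e,0)\}\cup id_S$, whence congruence-simplicity forces $|S|=2$ and $|SS|\geq 2$ forces $e^2=e$, giving $L_2$. The only cosmetic difference is your optional detour through Lemma \ref{6.5} and Theorem \ref{commutative}, which the paper does not need.
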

\begin{proof}
The implication (i)$\Rightarrow$(ii) follows from Proposition \ref{6.0.1} and the implication (iii)$\Rightarrow$(i) follows from Theorem \ref{6.4}.

If $S$ is congruence-simple then $S$ is subdirectly irreducible as well. Hence, by Theorem \ref{integral_main}, the semiring $S$ has the zero element $0\in S$, the least element  $e\in S$ such that $e\neq 0$ and the least non-identical congruence $\sigma$ of $S$ is of the form $\sigma=\{(0,e),(e,0)\}\cup id_S$. Since $S$ is congruence-simple, we obtain that $\sigma=S\times S$. Therefore $|S|=2$.

If, moreover, $|SS|\geq 2$ holds, then it follows that $e\cdot e=e$. Hence
 $S(+,\cdot)\cong L_2(\vee,\wedge)$, where $L_2$ is the two-element lattice. In this way we have proved the remaining implication (ii)$\Rightarrow$(iii).
\end{proof}

\begin{remark}\label{congruence_1}
 Let $S$ be a semiring. The relation $\lambda_S$ on $S$, defined as $(a,b)\in\lambda_S$ if and only if $ax=bx$ for all $x\in S$, is a congruence on $S$.
 
 Similarly, the relation $\varrho_S$ on $S$, defined as $(a,b)\in\varrho_S$ if and only if $xa=xb$ for all $x\in S$, is a congruence on $S$.
\end{remark}

\begin{theorem}\label{subdirectly_irreducible_3}
 For an almost integral semiring $S$  and $n\geq 2$ the following are equivalent:
 \begin{enumerate}
  \item[(i)] $\mathbf{M}_n(S)$ is subdirectly irreducible.
  \item[(ii)] $S$ is subdirectly irreducible and for all $a,b\in S$, $a\neq b$ there are $c,d\in S$ such that $cad\neq cbd$.
 \item[(iii)] The semiring $S$ has the zero element $0\in S$ and a least element  $e\in S$ such that $e\neq 0$. Also, for all $a,b\in S$, $a\not\leq b$ there are $c,d\in S$ such that $cad\neq 0= cbd$.
 \end{enumerate}
\end{theorem}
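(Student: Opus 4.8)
The plan is to establish the pure‑semiring equivalence (ii)$\Leftrightarrow$(iii) first, then prove (ii)$\Rightarrow$(i) through Proposition \ref{subdirectly_irreducible_2}, and finally the harder converse (i)$\Rightarrow$(ii); together these close the three‑way equivalence. For (ii)$\Leftrightarrow$(iii) the key observation is that, by Theorem \ref{integral_main}, subdirect irreducibility of the almost integral semiring $S$ is exactly the existence of a zero $0$ and a least nonzero element $e$ together with the separation ``for all $a\not\le b$ there are $c,d\in S^1$ with $cad\neq 0=cbd$''. Condition (iii) is the same statement but with $c,d\in S$ in place of $c,d\in S^1$, so (iii)$\Rightarrow$(ii) is immediate (a two‑sided $cad\neq 0=cbd$ gives $cad\neq cbd$, and the $S$‑separation trivially implies the $S^1$‑separation). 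For the converse I would extract from the separation in (ii), applied to $a=e$, $b=0$, a witness $c_0ed_0\neq 0$ with $c_0,d_0\in S$; then, given $a\not\le b$ and $c,d\in S^1$ from Theorem \ref{integral_main}, I use that $cad\neq 0$ forces $cad\ge e$ (since $e$ is least nonzero) and that multiplication is monotone, so $c_0(cad)d_0\ge c_0ed_0\neq 0$ while $c_0(cbd)d_0=0$. Setting $c'=c_0c$ and $d'=dd_0$, which lie in $S$ even if $c$ or $d$ was the adjoined unit, produces the required $S$‑witnesses; this ``absorption'' step is the technical heart of (ii)$\Leftrightarrow$(iii).

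For (ii)$\Rightarrow$(i) I would verify the hypothesis of Proposition \ref{subdirectly_irreducible_2}. The two‑sided separation $cad\neq cbd$ immediately yields $ca\neq cb$ and $ad\neq bd$ (otherwise the products would coincide), i.e.\ both one‑sided separations. Since $S$ has a zero, taking $f=g=0$ gives $ca+fe'=ca\neq cb=cb+fe'$ and $ad+e'g=ad\neq bd=bd+e'g$ for every $e'$, so the hypothesis of Proposition \ref{subdirectly_irreducible_2} holds; as $S$ is subdirectly irreducible, that proposition gives that $\mathbf{M}_n(S)$ is subdirectly irreducible.

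The remaining implication (i)$\Rightarrow$(ii) is the difficult one. Subdirect irreducibility of $S$ follows from Proposition \ref{subdirectly_irreducible}, so by Theorem \ref{integral_main} the elements $0$ and $e$ exist. For the separation I argue by contraposition: if it fails, then $\mu=\{(a,b): cad=cbd\text{ for all }c,d\in S\}$ is a non-identical congruence on $S$, hence contains the least non-identical congruence $\sigma=\{(0,e),(e,0)\}\cup id_S$, so $(0,e)\in\mu$ and therefore $ced=0$ for all $c,d\in S$. The structural consequence is that in $\mathbf{M}_n(S)$ a two-sided product $CE_{ij}D$ vanishes, where $E_{ij}$ has $e$ in position $(i,j)$ and $0$ elsewhere, whereas $CE_{ij}$ is supported in column $j$ and $E_{ij}D$ in row $i$. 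Using the description of generated congruences in Remark \ref{transitive}, the congruence $\delta_{ij}$ generated by $(E_{ij},\overline{0})$ is the transitive closure of the pairs $(CE_{ij}D+H,\,H)$; since $CE_{ij}D$ is always supported inside row $i$ or column $j$, any $\delta_{ij}$-related matrices agree outside this ``cross''. Because $n\ge 2$, every position $(p,q)$ lies outside the cross of some $(i,j)$, whence $\bigcap_{i,j}\delta_{ij}=id_{\mathbf{M}_n(S)}$; as each $\delta_{ij}$ is non-identical, $\mathbf{M}_n(S)$ has no least non-identical congruence, contradicting (i).

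I expect the main obstacle to be exactly this last ``cross‑spreading'' analysis: one must check that every elementary transform $CE_{ij}D+H$ in the generation of $\delta_{ij}$ leaves all entries off row $i$ and column $j$ unchanged, which is precisely where the hypothesis $ced=0$ (killing the genuinely two-sided products) is used, and that this forces $\bigcap_{i,j}\delta_{ij}=id$ for $n\ge2$. By contrast, the equivalence (ii)$\Leftrightarrow$(iii) and the implication (ii)$\Rightarrow$(i) are comparatively routine once Theorem \ref{integral_main} and Proposition \ref{subdirectly_irreducible_2} are in hand.
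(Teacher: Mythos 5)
Your proposal is correct, and the equivalences (ii)$\Leftrightarrow$(iii) and (ii)$\Rightarrow$(i) run essentially as in the paper: the paper also deduces (ii)$\Rightarrow$(iii) by composing the $S^1$-witnesses from Theorem \ref{integral_main} with witnesses from (ii) (it applies (ii) to the pair $(c'ad',0)$ rather than to $(e,0)$ plus monotonicity, a cosmetic difference), and it also verifies the hypothesis of Proposition \ref{subdirectly_irreducible_2} with $f=g=0$. The genuine divergence is in (i)$\Rightarrow$(ii). The paper argues one side at a time: if the right-annihilator congruence $\lambda_S$ of Remark \ref{congruence_1} were non-identical it would contain $(e,0)$, giving $ex=0$ for all $x$, and then for each $i$ the relation ``$A=B$ or $A,B$ differ only in their $i$-th columns with all entries of those columns in $\{0,e\}$'' is a non-identical congruence $\tau_i$ on $\mathbf{M}_n(S)$ with $\bigcap_i\tau_i=id$; the left-sided statement is symmetric, and the two one-sided separations are then composed to get $cad\neq cbd$. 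You instead keep the condition two-sided: the failure of the separation puts $(0,e)$ into the two-sided annihilator congruence $\mu$, forcing $ced=0$ for all $c,d\in S$, and you then invoke Remark \ref{transitive} to see that the principal congruence of $(E_{ij},\overline{0})$ only moves entries inside the row-$i$/column-$j$ cross, so the $n^2$ principal congruences $\delta_{ij}$ intersect to the identity. Both arguments are sound; the paper's yields the slightly stronger byproduct that each one-sided separation holds individually and only needs $n$ explicitly described congruences, while yours avoids constructing and verifying ad hoc congruences by reusing the description of generated congruences already available in Remark \ref{transitive}.
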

\begin{proof}
  (i)$\Rightarrow$(ii): By Proposition \ref{subdirectly_irreducible}, $S$ is subdirectly irreducible. By Theorem \ref{integral_main}, $S$ has a zero element $0$, a least element $e\in S$ such that $e\neq 0$ and the least non-identical congruence on $S$ is of the form $\{(0,e),(e,0)\}\cup id_S$.  
  
  Now we show that for all $a,b\in S$, $a\neq b$ there is $d\in S$ such that $ad\neq bd$. Assume the contrary, i.e., that the congruence $\lambda_S$ in Remark \ref{congruence_1} is non-identical. By the form of the least non-identical congruence on $S$ we have that $(e,0)\in \lambda_S$ and therefore $ex=0x=0$ for every $x\in S$. For $i=1,\dots,n$ let us define a relation $\tau_i$ on $T=\mathbf{M}_n(S)$ as follows: $(A,B)\in\tau_i$ if and only if $A=B$ or the matrices $A$ and $B$ differ only in their $i$-th columns and any entry of these $i$-th columns is equal either to $e$ or $0$. 
  
Let $A,B,C\in T$ and $(A,B)\in\tau_i$. Since $e$ is the least non-zero element in $S$ it follows that $(A+C,B+C)\in\tau_i$. Further, from the fact that $ex=0$ for every $x\in S$ we obtain that $AC=BC$. Finally, as $xe\leq e$ holds for every $x\in S$, we easily infer that $(CA,CB)\in\tau_i$.

Summing this up, we have shown that $\tau_i$ is a non-identical congruence on $T$. On the other hand, clearly, $\bigcap_{i=1}^n \tau_i= id_T$ and this is a contradiction with the fact that $T=\mathbf{M}_n(S)$ is subdirectly irreducible.

Therefore, indeed, for all $a,b\in S$, $a\neq b$ there is $d\in S$ such that $ad\neq bd$. By an analogous argument we prove that for all $a,b\in S$, $a\neq b$ there is $c\in S$ such that $ca\neq cb$. The assertion of (ii) now immediately follows.
 
   (ii)$\Rightarrow$(iii): By Theorem \ref{integral_main}, for all $a,b\in S$ $a\not\leq b$ there are $c',d'\in S^1$ such that $c'ad'\neq 0=c'bd'$. By the assumption of (ii), there are $c'',d''\in S$ such that $c''c'ad'd''\neq 0=c''c'bd'd''$. Now it is enough to set $c=c''c'\in S$ and $d=d'd''\in S$. The rest follows from Theorem \ref{integral_main}.

 (iii)$\Rightarrow$(i): By Theorem \ref{integral_main}, $S$ is subdirectly irreducible. It is enough to show that the assumption in Proposition \ref{subdirectly_irreducible_2} if fulfilled for $S$. Let $a,b,e\in S$ be such that $a\neq b$. Then without loss of generality $a\not\leq b$. By the assumption of (iii), there are $c,d\in S$ such that $cad\neq cbd$, in particular, we have $ca\neq cb$. Now for $f=0$ it holds that 
 $ca+fe=ca\neq cb=cb+fe$. Symmetrically we obtain the remaining condition. Now the assertion follows immediately from Proposition \ref{subdirectly_irreducible_2}.
\end{proof}

To show that the class of subdirectly irreducible (SI) almost integral semiring is broader than the class of SI integral semirings we provide several examples (see Remarks \ref{construction}, \ref{last_remark} and Examples \ref{example_const} and \ref{non-commutative}). In particular, let us note that there are SI almost integral semirings whose matrix semirings are \emph{not} SI (see Remark \ref{last_remark}). While for SI integral semirings this cannot happen, as they possess the unity that assures the condition (ii) in Theorem \ref{subdirectly_irreducible_3}.

To find examples of SI almost integral semirings we first investigate their structure of  in a more subtle way. 

\begin{proposition}\label{integral_unity}
Let $S$ be an almost integral semiring without the unity. Then the following are equivalent:
\begin{enumerate}
\item[(i)] $S$ is subdirectly irreducible.
\item[(ii)] $\overline{S}$  is subdirectly irreducible (see Remark \ref{remark_sub}(i)).
\end{enumerate}
 
\end{proposition}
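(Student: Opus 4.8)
The plan is to compare the congruence lattices of $S$ and $\overline{S}=S\cup\{u\}$ directly, using restriction of congruences in one direction and a canonical extension in the other, together with the fact established in Theorem \ref{integral_main} that the least non-identical congruence of a subdirectly irreducible almost integral semiring always has the shape $\{(0,e),(e,0)\}\cup id$, where $0$ is the zero and $e$ the least non-zero element. The point is that this distinguished pair $(0,e)$ lives entirely inside $S$, so subdirect irreducibility of either semiring reduces to the single question of whether every non-identical congruence contains $(0,e)$.

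First I would record that the zero and the least non-zero element pass freely between $S$ and $\overline{S}$. Since $u$ is the greatest element and is additively absorbing, a zero $0\in S$ stays a zero of $\overline{S}$; conversely the zero of $\overline{S}$ cannot be $u$, and since $|\overline{S}|\geq 3$ (as $S$ has no unity, $|S|\geq 2$) the least non-zero element cannot be $u$ either, so both lie in $S$ and are the zero and least non-zero element there. Consequently $\{(0,e),(e,0)\}\cup id_S$ and $\{(0,e),(e,0)\}\cup id_{\overline{S}}$ are non-identical congruences on $S$ and on $\overline{S}$ respectively, by the verification carried out in the proof of Theorem \ref{integral_main}.

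For (ii)$\Rightarrow$(i) I would take a non-identical congruence $\varrho$ on $S$ and extend it to $\overline{\varrho}=\varrho\cup\{(u,u)\}$ on $\overline{S}$, keeping $u$ in a block by itself. Because $x+u=u$ and $xu=ux=x$ for all $x$, this $\overline{\varrho}$ is again a congruence and is still non-identical; subdirect irreducibility of $\overline{S}$ then forces $(0,e)\in\overline{\varrho}$, and since $0,e\in S$ this gives $(0,e)\in\varrho$. Thus every non-identical congruence of $S$ contains $\{(0,e),(e,0)\}\cup id_S$, which is therefore its least non-identical congruence.

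For (i)$\Rightarrow$(ii) I would instead restrict a non-identical congruence $\varrho$ on $\overline{S}$ to $\varrho_0=\varrho\cap(S\times S)$, a congruence on $S$. If $\varrho_0\neq id_S$, subdirect irreducibility of $S$ gives $(0,e)\in\varrho_0\sub\varrho$ at once. The delicate case, which I expect to be the main obstacle, is when $\varrho_0=id_S$ while $\varrho\neq id_{\overline{S}}$: then $u$ must be identified with some $a\in S$, i.e.\ $(u,a)\in\varrho$. Here I would exploit that $u$ is the unity of $\overline{S}$: multiplying $(u,a)$ on either side by an arbitrary $x\in S$ yields $(x,ax),(x,xa)\in\varrho_0=id_S$, hence $ax=x=xa$ for all $x\in S$, making $a$ a unity of $S$ and contradicting the hypothesis that $S$ has no unity. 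This case therefore cannot occur, so every non-identical congruence of $\overline{S}$ contains $(0,e)$, whence $\{(0,e),(e,0)\}\cup id_{\overline{S}}$ is its least non-identical congruence and $\overline{S}$ is subdirectly irreducible.
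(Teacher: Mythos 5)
Your proof is correct, but it takes a genuinely different route from the paper's. The paper argues entirely through condition (ii) of Theorem \ref{integral_main}: for (i)$\Rightarrow$(ii) it verifies the separation condition $cad\neq 0=cbd$ in $\overline{S}$ for the only new pairs, namely $a=1_{\overline{S}}$ and $b\in S$, where the ``no unity'' hypothesis is used to produce $g\in S$ with $bg<g$ or $gb<g$ and then the separation condition of $S$ is applied to the pair $(g,bg)$; the converse is immediate since condition (ii) for $\overline{S}$ restricts to condition (ii) for $S$. You instead work directly at the level of congruences: you extend a congruence of $S$ to $\overline{S}$ by keeping $u$ in a singleton block, and you restrict congruences of $\overline{S}$ to $S$, using the ``no unity'' hypothesis to exclude the one problematic case of a congruence of $\overline{S}$ that is trivial on $S$ but glues $u$ to some $a\in S$ (which would force $a$ to be a unity of $S$). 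Both arguments lean on Theorem \ref{integral_main}, but only for the existence of $0$ and $e$ and the shape $\{(0,e),(e,0)\}\cup id$ of the least non-identical congruence in your case, versus the full separation criterion in the paper's. Your version is a bit longer than the paper's (whose backward direction is one line), but it has the merit of making completely transparent why the hypothesis ``without the unity'' is needed: if $S$ had a unity $1_S$, the relation identifying $u$ with $1_S$ and nothing else would be a non-identical congruence of $\overline{S}$ avoiding $(0,e)$. The small verifications you leave implicit (that $\varrho\cup\{(u,u)\}$ is compatible with the operations, that the zero and least non-zero element of either semiring lie in $S$ and serve both, and that $\{(0,e),(e,0)\}\cup id_{\overline{S}}$ is a congruence on $\overline{S}$, not just on $S$) all check out routinely.
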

\begin{proof}
(i)$\Rightarrow$(ii):  We only need to check the condition (ii) in Theorem \ref{integral_main} for new pairs of elements, i.e., for $a=1_{\overline{S}}\notin S$ and $b\in S$. Since $b$ is not a unity in $S$, there is $g\in S$ such that either $bg\neq g$ or $gb\neq g$, in particular, either $bg<g$ or $gb<g$. Assume the case $bg<g$. As the semiring $S$ is subdirectly irreducible, there are $c,d\in S^1$ such that $cbgd=0\neq cgd$. Now, for $c'=c$ and $d'=gd$ we have $c'bd'=0\neq c'ad'$. The other case is similar.
   
(ii)$\Rightarrow$(i): Follows immediately from Theorem \ref{integral_main}.
\end{proof}

\begin{proposition}\label{si_proposition}
 Let $S$ be an almost integral subdirectly irreducible semiring with a zero element $0$ and a least non-zero element $e$.
 \begin{enumerate}
 \item[(i)] If $|S|\geq 3$ then $e^2=0$. In particular, such a semiring $S$ is not multiplicatively idempotent (cf. \cite{vechtomov-petrov}).
 \item[(ii)] Let $S$ be commutative. If $a\in S$ is such that $ae\neq 0$ then $a$ is the greatest element of $S$. If $S$ has the unity $1\in S$ then $x+y=1$ implies $x=1$ or $y=1$ for all $x,y\in S$.
 \end{enumerate}
\end{proposition}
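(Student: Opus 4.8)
The plan is to treat the two parts separately, the main engine in both being the explicit description of the least non-identical congruence $\sigma=\{(0,e),(e,0)\}\cup id_S$ supplied by Theorem \ref{integral_main}.

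For (i), first note that almost integrality gives $e^2\leq e$, and since $e$ is the least non-zero element, any $x$ with $x\leq e$ lies in $\{0,e\}$; hence $e^2\in\{0,e\}$ and it suffices to exclude $e^2=e$ when $|S|\geq 3$. The idea is to build a congruence that keeps $0$ and $e$ apart and then invoke minimality of $\sigma$. I would set
\[
\kappa=\set{(a,b)}{ea=eb\ \text{and}\ ae=be}.
\]
Using that $ex,xe\in\{0,e\}$ for every $x$ (again from $ex\leq e$, $xe\leq e$ and leastness of $e$), I would verify that $\kappa$ is a two-sided congruence: the additive compatibility and two of the four multiplicative identities follow at once from $ea=eb$ and $ae=be$, while the remaining two are handled by a case distinction on whether $ec$ (resp.\ $ce$) equals $0$ or $e$. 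Establishing that $\kappa$ is genuinely a congruence is the step that needs the most care and is the main obstacle.

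Once $\kappa$ is a congruence, the finish is short. If $e^2=e$ then $e\cdot 0=0\neq e=e\cdot e$, so $(0,e)\notin\kappa$; since $\sigma$ is the least non-identical congruence, $\kappa$ cannot be non-identical, forcing $\kappa=id_S$. On the other hand, monotonicity of multiplication together with $e\leq a$ for every non-zero $a$ yields $e=e\cdot e\leq ea\leq e$, i.e.\ $ea=e$, and symmetrically $ae=e$, for all $a\neq 0$. Thus any two distinct non-zero elements are $\kappa$-related, so $\kappa\neq id_S$ as soon as $|S|\geq 3$, a contradiction. Hence $e^2=0$; in particular $e^2=0\neq e$ shows that $S$ is not multiplicatively idempotent.

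For (ii), assume $S$ commutative and $ae\neq 0$; then $ae\in\{0,e\}$ gives $ae=e$, so in particular $a\neq 0$. I would first record that $a$ annihilates no non-zero element: if $b\neq 0$ then $e\leq b$, whence $ab\geq ae=e>0$ by monotonicity. Now suppose $a$ is not the greatest element, so some $b$ satisfies $b\not\leq a$. Applying the criterion of Theorem \ref{integral_main}(ii) to the pair $b\not\leq a$ and using commutativity to merge the two one-sided factors into a single $s\in S^1$, I obtain $sb\neq 0=sa$. The cases $s=1$, $s=0$, and $s\in S\setminus\{0\}$ are each immediately contradictory (respectively $a=0$, $sb=0$, and $sa=as\neq 0$ by the annihilation remark), so $a$ must be the greatest element. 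Finally, for the unity claim, Proposition \ref{sub-ordered_basics}(ii) tells us $1$ is the greatest element; if $x+y=1$, multiplying by $e$ gives $xe+ye=e$ with $xe,ye\in\{0,e\}$, so at least one of them, say $xe$, is non-zero, and the preceding argument forces $x$ to be the greatest element, whence $x=1$ by uniqueness of the top. This is precisely the asserted join-irreducibility of $1$.
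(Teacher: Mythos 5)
Your proof is correct. Part (ii) follows essentially the paper's own route: the paper also takes $b\not\leq a$, extracts $c,d\in S^1$ from Theorem \ref{integral_main}, uses commutativity to form $cd$, observes $cd\in S\setminus\{0\}$ (so $e\leq cd$) and derives the contradiction $e=ae\leq a(cd)=cad=0$; your three-case analysis of $s=cd$ is just a more explicit rendering of the same step, and the argument for the unity claim via $xe+ye=e$ is identical. Part (i), however, is genuinely different. The paper chooses some $a>e$ (available since $|S|\geq 3$), applies the separation condition of Theorem \ref{integral_main} to the pair $a\not\leq e$ to get $c,d\in S^1$ with $cad\neq 0=ced$, notes that at least one of $c,d$ lies in $S\setminus\{0\}$ and hence dominates $e$, and then, assuming $e^2=e$, obtains $e=e^2\leq ced=0$ (or $e=e^3\leq ced=0$) directly. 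You instead construct the congruence $\kappa=\{(a,b)\mid ea=eb\ \text{and}\ ae=be\}$ (reminiscent of Remark \ref{congruence_1}, localized at $e$), show that under $e^2=e$ it omits the pair $(0,e)$ and is therefore the identity by minimality of $\sigma=\{(0,e),(e,0)\}\cup id_S$, yet identifies all non-zero elements, which is impossible once $|S|\geq 3$. Your congruence verification (the case split on $ec,ce\in\{0,e\}$) does go through. The paper's argument is shorter and uses only condition (ii) of Theorem \ref{integral_main}; yours trades that for an explicit auxiliary congruence and leans on the explicit form of the least non-identical congruence, which makes the mechanism of the obstruction more transparent.
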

\begin{proof}
 (i) If $|S|\geq 3$ then there is $a\in S$ such that $e<a$. Then, by Theorem \ref{integral_main} there are $c,d\in S^1$ such that $cad\neq 0=ced$. Hence, as $e\neq 0$,  it follows that either $0\neq c\in S$ or $0\neq d\in S$. If $0\neq c\in S$ then $c\geq e$ (and the same holds for $d$).  
 
 Assume, for contrary, that $e^2\neq 0$. Then $e^2=e$ and, consequently, $e^3=e$. Now, if $0\neq c\in S$ and $d\notin S$ or if $c\notin S$ and $0\neq d\in S$, then we have $e=e^2\leq ced=0$, a contradiction. Finally, if $0\neq c\in S$ and $0\neq d\in S$ then we obtain that $e=e^3\leq ced=0$, again a contradiction. 
 
 Therefore, $e^2=0$.

 (ii) Assume that $S$ is commutative. Let $a\in S$ be such that $ae\neq 0$. Since $ae\leq e$ we obtain that $ae=e$. Assume, for contrary, that $a$ is not the greatest element of $S$. Then there is $b\in S$ such that $b\not\leq a$. By Theorem \ref{integral_main}, there are $c,d\in S^1$ such that $(cd)b=cbd\neq 0=cad$. Hence $cd\in S\setminus\{0\}$ and therefore $e\leq cd$. It follows that $e=ae\leq a(cd)=0$, a contradiction.
 
 Let, moreover, $S$ have a unity. Let $x+y=1$ for some $x,y\in S$. Then $xe+ye=1\cdot e=e$ and therefore either $xe\neq 0$ or $ye\neq 0$. By the previous part we obtain that either $x$ or $y$ are the greatest element of $S$. Hence either $x=1$ or $y=1$.
\end{proof}

The properties stated in Proposition \ref{si_proposition} are an inspiration for the following construction.

\begin{remark}\label{construction}
 Let $S$ be an almost integral semiring with a zero element $0$ and let for all $a,b\in S$, $a\not\leq b$, there be $c,d\in S^1$ such that $cad\neq0=cbd$. Assume, moreover, that if $a\in S$ is a minimal element of $S\setminus\{0\}$ then $a^2=a$.
 
 Add a new element $e$ to $S$ in the way that $e$ becomes the least element of $(S\setminus\{0\})\cup\{e\}$.  The multiplicative operation on $S$ we extend by $e\cdot a=a\cdot e=0$ for every $a\in S\cup\{e\}$.
 
Then $S'$ is an almost integral subdirectly irreducible semiring.
\begin{proof}
 Indeed, we only need to verify condition (iii) in Proposition \ref{integral_main}, as the rest is easy. Let therefore $a,b\in S'$ be such that $a\not\leq b$. Among all the possibilities of $a$ and $b$, only the case when $0\neq a\in S$ and $b=e$ needs to be justified. 
 
 Assume first that $a$ is not a minimal element of $S\setminus\{0\}$. Then there is $a'\in S$ such that $0<a'<a$, hence, $a\not\leq a'$. By the assumption on $S$ there are $c,d\in S^1$ such that  $cad\neq0=ca'd$. Consequently, either $c\in S$ or $d\in S$ and, therefore, we obtain that $cbd=ced=0$. 
 
For the second case, let $a$ be a minimal element of $S\setminus\{0\}$. By the assumption on $S$ we have $a^2=a$ and we may set $c=d=a$. Then $cad=a^3=a\neq 0$ while $cbd=ced=0$ and we are done. 
\end{proof} 
\end{remark}

\begin{example}\label{example_const}
The following examples of a semiring $S$ fulfill the assumption from Remark \ref{construction}. Hence $S$ allows an extension  to a subdirectly irreducible semiring $S'$ according to Remark \ref{construction}.

 (i) $S(+,\cdot)=B(\vee,\wedge)$ where $B(\vee,\wedge, ',0,1)$ is a Boolean algebra.  For $a,b\in B$, $0\neq a\setminus b=a\wedge b'$ it is enough to set $c=d=b'$. Notice that  the subdirectly irreducible semiring $S'$ has the greatest element, but it  is \emph{not} a unity.
 
 (ii) $S(+,\cdot)=L(\cup,\cap)$ where $L$ is the set of all finite subsets of an infinite set $X$. For $a\sub X$ and $b\sub X$, $0\neq a\setminus b$, it is enough to set $c=d=a\setminus b$. Notice that the subdirectly irreducible semiring $S'$ is \emph{without} the greatest element.
\end{example}

\begin{remark}\label{last_remark}
 Let us point out that there are subdirectly irreducible almost integral semirings $S'$ (see Example \ref{example_const}) such that the semiring $\mathbf{M}_n(S')$ is \emph{not} subdirectly irreducible, while  for the extended semiring $\overline{S'}=S'\cup\{1\}$ (see Remark \ref{remark_sub}(i)), the semiring $\mathbf{M}_n(\overline{S'})$ is subdirectly irreducible. 
 
 Indeed, by the Example \ref{example_const} and the construction Remark \ref{construction} there is a subdirectly irreducible almost integral semiring $S'$ with the smallest non-zero element $e\in S'$ such that $ex=xe=0=0x=x0$ for all $x\in S'$. By Theorem \ref{subdirectly_irreducible_3}(ii), the matrix semiring $\mathbf{M}_n(S')$ is not subdirectly irreducible. On the other hand, for the subdirectly irreducible (almost) integral semiring $\overline{S'}$ with a unity (see Remark \ref{remark_sub}(i)), the semiring $\mathbf{M}_n(\overline{S'})$ is subdirectly irreducible, by Theorem \ref{subdirectly_irreducible_3}(ii).
\end{remark}

\section{Connection to pseudo MV-algebras}

An important subclass of integral semirings are the semirings arising from MV-algebras and \emph{pseudo} MV-algebras \cite{dvurecenskij,georgescu,rachunek,shang}. These algebras are closely connected to many-valued logics. By a famous Mundici’s
representation theorem \cite{mundici2} every MV-algebra can be represented as an interval in some abelian $\ell$-group. In \cite{dvurecenskij} this correspondence was further generalized by  expressing every pseudo MV-algebra as an interval  $[0,u]$, with $u>0$, in a  generally non-abelian $\ell$-group $G(\ast,^{-1},0, \wedge,\vee)$ (for the definition of an $\ell$-group see Example \ref{example}). Using this representation, to every pseudo MV-algebra can be assigned a semiring $S$ as in  Example \ref{example}. Let us, for short, call such semirings \emph{MV-derived}. 

These semirings are \emph{lattice-ordered}  which means that such a semiring $S(+,\cdot)$ is additively idempotent, the ordering $\leq$ has a structure of a lattice and there holds that $a+b=a\vee b$ and $a\cdot b\leq a\wedge b$ for all $a,b\in S$. For the details, definitions and theory  we refer the reader to \cite{golan}.  Clearly, every lattice-ordered semiring is (almost) integral. 

\begin{example}\label{example}
 Let $G(\ast,^{-1},0, \wedge,\vee)$ be a lattice-ordered group ($\ell$-group, for short). This means that
 \begin{itemize}
  \item $G(\ast,^{-1},0)$ is a group with a neutral element $0$.
    \item $G(\wedge,\vee)$ is a lattice.
  \item the operation $\ast$ distributes over $\vee$  and over $\wedge$ from both sides.
 \end{itemize}
 (Let us notice that we have denoted by $\ast$ the operation in the group $G$ to keep the operations $+$ and $\cdot$ only for semirings, while the notation $0$ of the neutral element in $G$ is chosen due to the usual convention used for MV-algebras.)
 
Now, choose $u\in G$ such that $u>0$ and consider the interval $$S=[0,u]=\set{a\in G}{ 0\leq a\leq u}.$$
For $a,b\in S=[0,u]$ we set
$$a\cdot b=(a\ast u^{-1}\ast b)\vee 0\ .$$

Then $S(\vee,\cdot)$ is an additively idempotent semiring (see, e.g., \cite{dvurecenskij,shang}) with the zero element $0$ and the unity $u$ (that, obviously, is the greatest element in $S$). Hence, by Remark \ref{remark_sub}, the MV-derived semiring $S$ is integral.
\end{example}

The following two Lemmas \ref{mv_basic} and \ref{l-groups_basic} will help us to characterize the subdirectly irreducible MV-derived semirings.

\begin{lemma}\label{mv_basic}
 Let $S$ be an MV-derived semiring. Then for all $a,b\in S$, $a\not\leq b$, there are $c,d\in S$ such that $cad\neq 0=cbd$. 
\end{lemma}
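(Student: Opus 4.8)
The plan is to exhibit explicit witnesses built from the one-sided negation available in the $\ell$-group and to reduce the two product computations to order comparisons in $G$. Throughout I recall that the semiring multiplication on $S=[0,u]$ is $x\cdot y=(x\ast u^{-1}\ast y)\vee 0$, that the semiring addition is the join with $0$ as the semiring zero, and that $u=1_S$ is the unity and the greatest element of $S$.

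First I would fix the candidates $c=b^{-1}\ast u$ and $d=u$. The initial step is to check that $c\in S$, i.e.\ that $0\leq b^{-1}\ast u\leq u$: from $0\leq b$ one gets $b^{-1}\leq 0$ and hence $b^{-1}\ast u\leq u$ by right-multiplying by $u$, while from $b\leq u$ one gets $u^{-1}\leq b^{-1}$ and hence $0\leq b^{-1}\ast u$; thus $c\in[0,u]=S$. Since $d=u=1_S$ is the unity, the two products in the statement reduce to $cbd=c\cdot b$ and $cad=c\cdot a$.

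Next I would compute both products directly from the defining formula, using the cancellation $c\ast u^{-1}=b^{-1}\ast u\ast u^{-1}=b^{-1}$. This yields $c\cdot b=(b^{-1}\ast b)\vee 0=0\vee 0=0$ and $c\cdot a=(b^{-1}\ast a)\vee 0$. It then remains to see that the latter is nonzero, and this is exactly where the hypothesis $a\not\leq b$ is used: since left translation in an $\ell$-group is order preserving, $a\leq b$ is equivalent to $b^{-1}\ast a\leq 0$, so $a\not\leq b$ forces $b^{-1}\ast a\not\leq 0$ and therefore $c\cdot a=(b^{-1}\ast a)\vee 0\neq 0$. Hence $cad=c\cdot a\neq 0=c\cdot b=cbd$, as required.

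The only point that looks like it could cause difficulty is the non-commutativity of $G$ in the genuine pseudo MV-algebra case: naively one might try the other negation $u\ast b^{-1}$, for which $c\cdot b$ carries a commutator-type factor $u\ast b^{-1}\ast u^{-1}\ast b$ that need not collapse to $0$. The key observation that dissolves this obstacle is to choose the correct one-sided negation $c=b^{-1}\ast u$, for which the factor $u^{-1}$ cancels cleanly on the \emph{left} and no commutator survives; this is precisely what makes $c\cdot b=0$ hold verbatim even when $G$ is non-abelian, so no separate argument for the abelian and non-abelian cases is needed.
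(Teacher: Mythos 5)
Your proof is correct and follows essentially the same route as the paper: the same witnesses $c=b^{-1}\ast u$, $d=u$, the same cancellation giving $c\cdot b=0$, and the same use of $a\not\leq b\iff b^{-1}\ast a\not\leq 0$. The only difference is that you additionally verify $c\in[0,u]$ explicitly, which the paper leaves implicit.
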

\begin{proof}
 Assume the construction of $S=[0,u]$ used in Example \ref{example}. Let $a,b\in S$, $a\not\leq b$.  We set $c=b^{-1}\ast u\in S$ and $d=u$. Since $u$ is a unity in $S$ we have $cbd=cb=(b^{-1}\ast u\ast u^{-1}\ast b)\vee 0=0\vee 0=0$. From  $a\not\leq b$ it follows that $b^{-1}\ast a\not\leq 0$ and therefore $cad=ca=(b^{-1}\ast u\ast u^{-1}\ast a)\vee 0=(b^{-1}\ast a)\vee 0\neq 0$.
\end{proof}

\begin{example}\label{non-commutative}
 Let $\mathcal{G}(\R)$ be the group of all continuous strictly increasing bijective maps $\R\to\R$ with the group operation defined by composition of maps. Define the partial order on $\mathcal{G}(\R)$ by $f\leq g$ if and only if $f(x)\leq g(x)$ for every $x\in\R$. This ordering forms a lattice with operations $(f\wedge g)(x)=\min\{f(x),g(x)\}$ and $(f\vee g)(x)=\max\{f(x),g(x)\}$ for every $x\in\R$.
 
 Now, $\mathcal{G}(\R)$ is a non-commutative lattice-ordered group with neutral element $o=id_\R$. If $u\in\mathcal{G}(\R)$ is such that $o<u$, then the interval $[o,u]$ has no minimal element different from $o$.
 
 Let $S=[o,u]$ be the corresponding MV-derived semiring. By Remark \ref{construction} and Lemma \ref{mv_basic}, the extended semiring $S'$ is non-commutative, almost integral and subdirectly irreducible. 
\end{example}

\begin{lemma}\label{l-groups_basic}
 Let $G(\ast,^{-1},0, \wedge,\vee)$ be a lattice-ordered group with a unit element $0$. Let $u\in G$ be such that $u>0$ and  the interval $[0,u]=\set{x\in G}{0\leq x\leq u}$ has the least element different from $0$. Then the ordering on the interval $[0,u]$ is total.
\end{lemma}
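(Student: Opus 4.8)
The plan is to argue by contradiction: assume the order on $[0,u]$ is not total, so that there exist incomparable elements $x,y\in[0,u]$, and then manufacture from them two \emph{disjoint} nonzero elements of $[0,u]$. The existence of such a disjoint pair is incompatible with $[0,u]$ having a least nonzero element, and this yields the contradiction.

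First I would fix incomparable $x,y\in[0,u]$ and put $m=x\wedge y$. Incomparability gives $m<x$ and $m<y$ strictly (if $m=x$ then $x\le y$, and similarly for $y$). I would then pass to the right translates $a=x\ast m^{-1}$ and $b=y\ast m^{-1}$. Since right multiplication by a fixed group element is an order-automorphism of $G$ that distributes over $\wedge$, I obtain $a\wedge b=(x\wedge y)\ast m^{-1}=m\ast m^{-1}=0$, while $x>m$ and $y>m$ give $a>0$ and $b>0$. Thus $a,b$ are disjoint strictly positive elements.

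The key step, and the place where the non-commutativity of $G$ must be handled carefully, is to verify that $a$ and $b$ in fact lie in $[0,u]$. Nonnegativity is immediate from $x,y\ge m$ (right-multiply by $m^{-1}$). For the upper bound I would use that $m\ge 0$ forces $m^{-1}\le 0$: right-multiplying $x\le u$ by $m^{-1}$ gives $a=x\ast m^{-1}\le u\ast m^{-1}$, and left-multiplying $m^{-1}\le 0$ by $u$ gives $u\ast m^{-1}\le u\ast 0=u$, so $a\le u$; the argument for $b$ is identical. Hence $a,b$ are nonzero elements of $[0,u]$.

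Finally, letting $e$ denote the least nonzero element of $[0,u]$, both $a$ and $b$ satisfy $e\le a$ and $e\le b$, so $e\le a\wedge b=0$, contradicting $e>0$. Therefore no incomparable pair can exist and the ordering on $[0,u]$ is total. I expect the only genuine obstacle to be the bookkeeping inside the non-abelian $\ell$-group, namely keeping track of which translations act on the left and which on the right when deriving $a,b\in[0,u]$; once that containment is established, the disjointness identity and the contradiction with the least nonzero element are immediate.
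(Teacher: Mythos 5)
Your argument is correct and is essentially the paper's own proof: both take incomparable $x,y$, right-translate by $(x\wedge y)^{-1}$ to produce two disjoint strictly positive elements of $[0,u]$, and contradict the existence of the least nonzero element. The only cosmetic difference is in verifying the upper bound (the paper shows $x\ast m^{-1}\le x\le u$ directly, while you pass through $u\ast m^{-1}\le u$), which is the same idea.
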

\begin{proof}
 Let $e$ be the least element in $[0,u]$ different from $0$.  Assume for contrary that there are $a,b\in [0,u]$ such that $a\not\leq b$ and $b\not\leq a$. Set $c=a\wedge b$. Then $c<a$ and $c<b$ and $0=c\ast c^{-1}=(a\wedge b)\ast c^{-1}=(a\ast c^{-1})\wedge(b\ast c^{-1})$.
 From $0\leq c$ and $c<a$ we obtain that  $a\ast c^{-1}=a\ast c^{-1}\ast 0 \leq a\ast c^{-1}\ast c=a\leq u$ and $0=c\ast c^{-1}\leq a\ast c^{-1}$. Hence $a\ast c^{-1}\in[0,u]$ and, similarly, $b\ast c^{-1}\in[0,u]$.
 
 Further, since $a\ast c^{-1}\neq 0$ and $b\ast c^{-1}\neq 0$ we have $e\leq a\ast c^{-1}$ and $e\leq b\ast c^{-1}$. It follows that $e=e\wedge e\leq (a\ast c^{-1})\wedge(b\ast c^{-1})=0$, a contradiction.
 
 Therefore, the ordering on the interval $[0,u]$ is total.
\end{proof}

Let us notice, that elements with properties from Lemma \ref{l-groups_basic} are studied in theory of $\ell$-groups: an element $a>0$ is called \emph{basic} iff the interval $[0,a]$ is totally ordered. 

Now we are ready to state the main result of this section.

\begin{theorem}\label{corollary_GMV}
 Let  $S$ be an MV-derived semiring (see Example $\ref{example}$) and $n\geq 1$. Then:
 \begin{enumerate}
  \item[(i)] $\mathbf{M}_n(S)$ is subdirectly irreducible if and only if in $S$ there is the least element $e\in S$ such that $e\neq 0$.
  \item[(ii)] $\mathbf{M}_n(S)$   is congruence-simple if and only if $|S|=2$.
 \end{enumerate}
 If $S$ is subdirectly irreducible, then $S$ is totally ordered.
\end{theorem}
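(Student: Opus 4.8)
The key observation is that $S$, being MV-derived, is \emph{integral}: by Example \ref{example} it has a zero $0$ and the unity $u=1_S$ as its greatest element, so it is in particular almost integral and additively idempotent, and the whole machinery of Section \ref{section_integral} applies. The decisive simplification is that Lemma \ref{mv_basic} already verifies, for \emph{every} MV-derived $S$, the separation condition ``for all $a\not\leq b$ there are $c,d$ with $cad\neq 0=cbd$''. I would therefore begin by recording the backbone equivalence: combining Lemma \ref{mv_basic} with Theorem \ref{integral_main} (the zero is present and the separation condition is automatic), the subdirect irreducibility of $S$ collapses to the single requirement that $S$ possess a least non-zero element $e$. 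Everything else is assembly.

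For part (i) I would split on $n$. When $n=1$ we have $\mathbf{M}_1(S)=S$, so the claim is literally the backbone equivalence. When $n\geq 2$, since $S$ is additively idempotent with both a zero and a unity, Theorem \ref{subdirectly_irreducible_4} gives that $\mathbf{M}_n(S)$ is subdirectly irreducible if and only if $S$ is; chaining this with the backbone equivalence yields exactly the stated characterization in terms of the least non-zero element.

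For part (ii) the reverse direction is a one-line computation: if $|S|=2$ then $S=\{0,u\}$ and $u\cdot u=(u\ast u^{-1}\ast u)\vee 0=u\vee 0=u$, so $S\cong L_2(\vee,\wedge)$; since $0$ is multiplicatively absorbing, $|SS|=2$, and there is no bi-absorbing element, Theorem \ref{6.4} makes $\mathbf{M}_n(S)$ congruence-simple for every $n\geq 1$. For the forward direction, the case $n=1$ gives $S$ congruence-simple, and as $S$ is almost integral Theorem \ref{lattice} forces $|S|=2$; for $n\geq 2$, Theorem \ref{lattice} directly yields $S\cong L_2$, again $|S|=2$. Finally, the closing assertion follows at once: if $S$ is subdirectly irreducible, the backbone equivalence supplies a least non-zero element $e$, and applying Lemma \ref{l-groups_basic} to the interval $[0,u]=S$ shows the ordering on $S$ is total.

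The main point is that no genuinely new argument is required; the entire proof is an orchestration of Lemma \ref{mv_basic}, Theorem \ref{integral_main}, Theorem \ref{subdirectly_irreducible_4}, Theorem \ref{lattice}, and Lemma \ref{l-groups_basic}. The only step demanding care is the bookkeeping of the boundary case $n=1$ separately from $n\geq 2$, since Theorems \ref{subdirectly_irreducible_4} and \ref{lattice} are stated only for $n\geq 2$ while the present theorem claims $n\geq 1$; handling $n=1$ via the identification $\mathbf{M}_1(S)=S$ removes this obstacle.
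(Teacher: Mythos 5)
Your proof is correct and follows essentially the same route as the paper: both reduce everything to Lemma \ref{mv_basic}, Theorem \ref{integral_main}, Theorem \ref{lattice} and Lemma \ref{l-groups_basic}, the only cosmetic difference being that you transfer subdirect irreducibility between $S$ and $\mathbf{M}_n(S)$ via Theorem \ref{subdirectly_irreducible_4} (using the zero and unity of $S$) where the paper invokes Theorem \ref{subdirectly_irreducible_3}. Your explicit handling of the $n=1$ case via $\mathbf{M}_1(S)=S$ is a welcome bit of extra care that the paper passes over quickly.
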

\begin{proof}
(i) If  $\mathbf{M}_n(S)$ is subdirectly irreducible for some $n\geq 1$, then $S\cong \mathbf{M}_1(S)$ is subdirectly irreducible and $S$ has the least non-zero element, by Theorem \ref{subdirectly_irreducible_3}. Conversely, if  $S$ has the least non-zero element then, by Lemma \ref{mv_basic} and  Theorem \ref{subdirectly_irreducible_3}, the semiring $\mathbf{M}_n(S)$ is subdirectly irreducible for any $n\geq 1$.
Moreover, by Lemma \ref{l-groups_basic}, the semiring $S$ is totally ordered.

(ii) Follows immediately from Theorem \ref{lattice}.
\end{proof}

\begin{example}
 Let $G=\set{\left(\begin{smallmatrix}\alpha &\beta \\ 0 & 1\end{smallmatrix}\right)}{\alpha,\beta\in\Q,\ \alpha>0}$ be the multiplicative group with a total order defined by $\left(\begin{smallmatrix}\alpha &\beta \\ 0 & 1\end{smallmatrix}\right)\geq \left(\begin{smallmatrix}\alpha' &\beta' \\ 0 & 1\end{smallmatrix}\right)$ if and only if either $\alpha>\alpha'$ or both $\alpha=\alpha'$ and $\beta\geq \beta'$ hold. It is easy to verify that with this ordering is $G$ an $\ell$-group. Denote by $E$ the identical matrix in $G$.
 
Assume now the direct product of groups $H=G\times\Z$, where $\Z$ has the usual additive operation, with the lexicographical ordering defined for every $(A,k), (A',k')\in H$ by $(A,k)\geq (A',k')$ if and only if either $A> A'$ or both $A=A'$ and $k\geq k'$ hold. Again, this total order  makes $H$ into a non-commutative $\ell$-group. Clearly, the element $e:=(E,1)$ is the least element greater than the unity $o:=(E,0)$ in $H$.

Now, put  $u:=\left(\left(\begin{smallmatrix}3 & 0 \\ 0 & 1\end{smallmatrix}\right),0\right)$. Then the MV-derived semiring $S=[o,u]$ is subdirectly irreducible, by Theorem \ref{corollary_GMV}, but \emph{non-commutative}.  

Indeed, for instance, the elements $a=\left(\left(\begin{smallmatrix}2 & 0 \\ 0 & 1\end{smallmatrix}\right),0\right)\in S$ and $b=\left(\left(\begin{smallmatrix}2 & 1 \\ 0 & 1\end{smallmatrix}\right),0\right)\in S$ do not commute in $S$. Moreover, the element $e\in S$ does not commute with $b\in S$ in $S$. This shows that the construction in Remark \ref{construction} captures only some of the cases of the almost integral subdirectly irreducible semirings.
\end{example}

\end{document}